\theoremstyle{plain}
\newtheorem{Thm}{Theorem}[section]
\newtheorem{Prop}[Thm]{Proposition}
\newtheorem{Cor}[Thm]{Corollary}
\newtheorem{Lem}[Thm]{Lemma}
\theoremstyle{definition}
\newtheorem{Expl}[Thm]{Example}
\newtheorem{Rem}[Thm]{Remark}
\numberwithin{equation}{section}
\title{Semi-orthogonal decomposition of a derived category of a $3$-fold with an ordinary double point \\
Dedicated to Professor Miles Reid for his seventieth birthday}
\author{Yujiro Kawamata}
\begin{document}
\maketitle

\begin{abstract}
We consider semi-orthogonal decompositions of derived categories 
for $3$-dimensional projective varieties
in the case when the varieties have ordinary double points.
\end{abstract}

14F05, 14F17, 14E05

\tableofcontents

%%%%%%%%%%%%%%%%%%%%%%%%%%
%%%%%%%%%%%%%%%%%%%%%%%%%%
%%%%%%%%%%%%%%%%%%%%%%%%%%
\section{Introduction}

The bounded derived category of coherent sheaves $D^b(\text{coh}(X))$ 
of an algebraic variety $X$ reflects
important properties of the variety $X$.
The abelian category of coherent sheaves $(\text{coh}(X))$ can be identified to $X$ itself, 
but $D^b(\text{coh}(X))$ has
more symmetries and the investigation of its structure may reveal something more fundamental.
If $X$ is a smooth projective variety, then $D^b(\text{coh}(X))$ satisfies finiteness properties such as 
finite homological dimension, Home-finiteness, and saturatedness (\cite{BvdB}).
A birational map like blowing up is reflected to a semi-orthogonal decomposition (SOD) (\cite{BO}).
Even singular varieties with quotient singularities can be considered in the same way by replacing them
by smooth Deligne-Mumford stacks (\cite{DK}, \cite{SDG}), and we observe the parallelism between the 
minimal model program (MMP) and SOD.   

But recently we found that the category $D^b(\text{coh}(X))$ for a singular variety $X$ 
may have nice SOD when the singularities are 
not so bad (\cite{NC}, \cite{Kuznetsov}, \cite{KKS}) at least in the case of surfaces.
The paper \cite{KKS} shows that the structure of $D^b(\text{coh}(X))$ 
is quite interesting at least in the case of 
a rational surface with cyclic quotient singularities.

In this paper we calculate the case of dimension $3$.
We expect that there are still richer structures in dimension $3$. 
We mainly consider $3$-fold with an ordinary double point (ODP).
There are two cases; $\mathbf{Q}$-factorial and non-$\mathbf{Q}$-factorial.  
We will see the difference in the following.
We note that an ordinary double point is $\mathbf{Q}$-factorial if and only if it is (locally) factorial.
We also note that an ordinary double point on a rational surface is never factorial, but there are rational 
$3$-folds with factorial and non-factorial ODP's. 

\vskip 1pc

We first recall some definitions and theorems on generators of categories in \S2 
and the triangulated category of singularities $D_{\text{sg}}(X)$ of a variety $X$ in \S3.
We calculate triangulated categories of singularities in the case of ODP in \S4.
Then we prove the following in \S5:

\begin{Thm}[= Theorem~\ref{main}]
Let $X$ be a Gorenstein projective variety, let $L$ be a maximally Cohen-Macaulay coherent sheaf on $X$ 
which generates $D_{\text{sg}}(X)$, and let $F$ be a coherent sheaf which is constructed from $L$ 
by a flat non-commutative (NC) deformation over the 
endomorphism algebra $R = \text{End}(F)$ such that $\text{Hom}(F,F[p]) = 0$ for all $p \ne 0$.
Then $D^b(\text{coh}(X))$ has an SOD into the triangulated subcategory generated by $F$,  
which is equivalent to $D^b(\text{mod-}R)$, and 
its right orthogonal complement which is saturated, i.e., 
the complement is similar to the case of a smooth projective variety.
\end{Thm}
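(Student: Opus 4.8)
The plan is to realize the projection onto the $F$-component through an adjoint pair of functors, and then to isolate all the singular behaviour of $X$ inside $\langle F\rangle$ so that the complement inherits Serre duality. Concretely, I would work with
\[
\Phi = \mathrm{RHom}(F,-)\colon D^b(\text{coh}(X)) \longrightarrow D(\text{Mod-}R), \qquad \Psi = (-)\otimes^{\mathbf L}_R F,
\]
where $\Psi$ is the left adjoint of $\Phi$. The hypothesis $\mathrm{Hom}(F,F[p]) = 0$ for $p\neq 0$ together with $\mathrm{End}(F) = R$ says precisely that $\mathrm{RHom}(F,F) = R$, concentrated in degree $0$; hence the unit $\mathrm{id}\to\Phi\Psi$ is an isomorphism on $R$ and therefore on the thick subcategory $\mathrm{Perf}(R)$ generated by $R$. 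Thus $\Psi$ is fully faithful with essential image $\langle F\rangle$, and $\langle F\rangle \simeq \mathrm{Perf}(R)$. To upgrade $\mathrm{Perf}(R)$ to $D^b(\text{mod-}R)$ I would invoke the computation of the non-commutative deformation at an ordinary double point from \S4: flatness of the deformation together with the rigidity of $F$ forces $R$ to be module-finite and of finite global dimension, so $\mathrm{Perf}(R) = D^b(\text{mod-}R)$.

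Granting the equivalence $\langle F\rangle\simeq D^b(\text{mod-}R)$, the semi-orthogonal decomposition is then formal. Note first that $\langle F\rangle^\perp = \ker\Phi$, since the cohomology of $\Phi(A)$ computes $\mathrm{Hom}(F,A[k])$. For $E\in D^b(\text{coh}(X))$ I would form the counit triangle
\[
\Psi\Phi(E)\longrightarrow E\longrightarrow C\longrightarrow \Psi\Phi(E)[1],
\]
and apply $\Phi$; the triangle identities and full faithfulness of $\Psi$ force $\Phi(C)=0$, i.e.\ $C\in\langle F\rangle^\perp$. Since $\Psi\Phi(E)\in\langle F\rangle$ and $\mathrm{Hom}(\Psi M,A)=\mathrm{Hom}_R(M,\Phi A)=0$ for $A\in\langle F\rangle^\perp$ by adjunction, this exhibits
\[
D^b(\text{coh}(X)) = \langle\, \langle F\rangle^\perp,\ \langle F\rangle\,\rangle,
\]
with $\langle F\rangle^\perp$ the asserted right orthogonal complement. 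So far this uses only the rigidity of $F$ and standard adjunction, and I expect no difficulty.

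The substance of the theorem is that $\mathcal A := \langle F\rangle^\perp$ is saturated. Because $X$ is Gorenstein projective of dimension $n$, the dualizing complex is the line bundle $\omega_X[n]$, so $S = (-)\otimes\omega_X[n]$ is an autoequivalence of $D^b(\text{coh}(X))$; for smooth $X$ it is the Serre functor, and in general the failure of the duality isomorphism $\mathrm{Hom}(A,B)\xrightarrow{\ \sim\ }\mathrm{Hom}(B,SA)^\ast$ is supported on the singularity category $D_{\text{sg}}(X)$. The key point I would establish is that $\mathcal A$ is $S$-invariant and that $S|_{\mathcal A}$ is a genuine Serre functor: since $L$ generates $D_{\text{sg}}(X)$ and $L$ lies in $\langle F\rangle$, every singular direction along which the duality isomorphism could fail is absorbed into $\langle F\rangle$, leaving the pairing on $\mathcal A=\langle F\rangle^\perp$ nondegenerate. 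Equivalently, this is the assertion that $\mathcal A$ has finite homological dimension, i.e.\ is regular. Combined with Ext-finiteness (immediate from properness of $X$) and the existence of a strong generator for $\mathcal A$ — obtained by projecting a strong generator of $D^b(\text{coh}(X))$, which exists since $X$ is a proper scheme, through the admissible decomposition — the criterion of Bondal--Van den Bergh then yields that $\mathcal A$ is saturated, hence similar to the derived category of a smooth projective variety.

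The hard part is the middle claim of the last paragraph: that removing $\langle F\rangle$ regularizes the complement, i.e.\ that $S$ restricts to an honest Serre functor on $\mathcal A$. I would reduce this to a local statement at the ordinary double point and use the explicit description $D_{\text{sg}}(X)\simeq\underline{\mathrm{MCM}}(X)$ from \S4 together with the rigidity $\mathrm{Hom}(F,F[p])=0$: the non-commutative deformation $F$ is built so that its endomorphisms see all of $\underline{\mathrm{MCM}}(X)$, which is exactly what is needed to annihilate the singular part of the Serre pairing on the orthogonal. Verifying this vanishing — rather than the formal construction of the decomposition — is where the real work lies.
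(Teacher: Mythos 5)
Your proposal has two genuine gaps, and the first one is not repairable within your framework. You claim that flatness of the deformation plus rigidity of $F$ force $R$ to have finite global dimension, so that $\text{Perf}(R)=D^b(\text{mod-}R)$ and hence $\langle F\rangle\simeq\text{thick}(F)\simeq D^b(\text{mod-}R)$. This is false, and it contradicts the hypotheses whenever $X$ is actually singular: $F$ is a perfect complex (a locally free sheaf in the applications), so the thick subcategory generated by $F$ is contained in $\text{Perf}(X)$; but the theorem requires $L$ to lie in the subcategory of the decomposition and to generate $D_{\text{sg}}(X)$, which forces $L\notin\text{Perf}(X)$ as soon as $D_{\text{sg}}(X)\ne 0$. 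Concretely, in the paper's own application (Theorem~\ref{ODP}) one has $R=\left(\begin{smallmatrix} k & kt\\ kt & k\end{smallmatrix}\right)/(t^2)$, a self-injective algebra of \emph{infinite} global dimension: the simple $R$-modules, whose images under $-\otimes_R F$ are exactly $L_1,L_2$, have $2$-periodic infinite projective resolutions, matching $\bar L_i\cong\bar L_i[2]$ in $D_{\text{sg}}(X)$. The entire content of assertion (1) is that the strictly larger category $D^b(\text{mod-}R)\supsetneq\text{Perf}(R)$ embeds, and this is exactly where the paper uses flatness of $F$ over $R$ (which your argument never uses): $-\otimes_R F$ is then exact, hence defined and bounded on all of $D(\text{Mod-}R)$, and $\Psi\Phi\cong\text{id}$ is proved there via K-projective resolutions, not only on $\text{thick}(R)$. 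The error propagates into your decomposition step: $\Phi(E)=R\text{Hom}(F,E)$ is in general \emph{not} perfect over $R$, so your full faithfulness (established only on $\text{Perf}(R)$) does not apply at $\Phi(E)$ — hence $\Phi(C)=0$ is unproven — and $\Psi\Phi(E)$ need not lie in your $\langle F\rangle$.

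The second gap is the saturation of the complement. Hom-finiteness of $\mathcal{A}=T^{\perp}$ is \emph{not} immediate from properness when $X$ is singular (e.g.\ $\bigoplus_i\text{Ext}^i(\mathcal{O}_P,\mathcal{O}_P)$ is infinite-dimensional at a singular point $P$); it requires proving $T^{\perp}\subset\text{Perf}(X)$. The paper deduces this containment from the precise form of condition (3) of Theorem~\ref{main} — $\text{Hom}(L,A)\ne 0$ for every $A\not\cong 0$ in $D_{\text{sg}}(X)$, with \emph{no shift} allowed — combined with Orlov's comparison theorem (Theorem~\ref{Orlov-Prop2}), which exhibits $\text{Hom}_{D_{\text{sg}}(X)}(\bar L,\bar G[N])$ as a quotient of $\text{Hom}_{D^b(\text{coh}(X))}(L,G[N])$ for suitable $N$: if $G\notin\text{Perf}(X)$ then $\text{Hom}(L,G[N])\ne 0$, so $G\notin T^{\perp}$ because $L\in T$. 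You correctly identify that absorbing the singularity into $\langle F\rangle$ is the crux, but your plan — a local computation with $\underline{\text{MCM}}$ at an ordinary double point — does not apply to the general Gorenstein $X$ of the theorem and is, as you acknowledge, not carried out. I would add that once $T^{\perp}\subset\text{Perf}(X)$ is in hand, your proposed route to saturation (Karoubian $+$ Hom-finite $+$ a strong generator obtained by projecting one from $D^b(\text{coh}(X))$, then the Bondal--Van den Bergh criterion quoted in Section 2) is a genuine alternative to the paper's Brown representability/Beilinson argument, though it needs Rouquier's nontrivial theorem that $D^b(\text{coh}(X))$ of a singular projective variety has a strong generator. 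As written, however, the proposal's two pillars — the identification of the subcategory with $D^b(\text{mod-}R)$ and the perfection of the complement — are respectively false and missing.
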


We apply the theorem to the case where $X$ is a $3$-fold with only one ODP 
which is not $\mathbf{Q}$-factorial, and 
calculate the structure of $D^b(\text{coh}(X))$ in \S6:

\begin{Thm}[= Theorem~\ref{ODP}]
Let $X$ be a $3$-dimensional projective variety with only one ODP which is not $\mathbf{Q}$-factorial.
Assume that there are reflexive sheaves $L_1,L_2$ of rank $1$ such that 
$\dim H^0(X, \mathcal{H}om(L_i,L_j)) = \delta_{ij}$ and that 
$H^p(X, \mathcal{H}om(L_i,L_j)) = 0$ for $p > 0$.
Assume moreover that $L_1,L_2$ generate the triangulated category of singularities $D_{\text{sg}}(X)$.
Then $L_1,L_2$ generate an admissible subcategory of $D^b(\text{coh}(X))$, 
which is equivalent to $D^b(\text{mod-}R)$ 
for a $4$-dimensional non-commutative algebra $R$, such that its right orthogonal complement is 
a saturated category.  
\end{Thm}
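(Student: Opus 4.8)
The plan is to deduce the statement from Theorem~\ref{main} applied to the sheaf $L = L_1 \oplus L_2$, so I first verify the hypotheses of that theorem. An ordinary double point on a $3$-fold is analytically the nodal hypersurface $\{xy - zw = 0\}$, so $X$ is Gorenstein. Each $L_i$ is locally free on the smooth locus, and at the node the two non-Cartier Weil divisor classes give exactly the two indecomposable rank-$1$ reflexive modules, both of which are maximally Cohen--Macaulay; hence $L = L_1 \oplus L_2$ is MCM. That $L$ generates $D_{\text{sg}}(X)$ is part of the assumption.

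Next I would determine the morphisms and extensions between $L_1$ and $L_2$ by feeding the computation of $D_{\text{sg}}(X)$ from \S4 into the local-to-global spectral sequence $H^q(X, \mathcal{E}xt^p(L_i, L_j)) \Rightarrow \text{Ext}^{p+q}(L_i, L_j)$. The hypotheses $\dim H^0(X, \mathcal{H}om(L_i, L_j)) = \delta_{ij}$ and $H^{>0}(X, \mathcal{H}om(L_i, L_j)) = 0$ give $\text{Hom}(L_i, L_j) = \delta_{ij}\,k$, so $L_1, L_2$ are simple and admit no morphisms between them. The sheaves $\mathcal{E}xt^{p}(L_i, L_j)$ for $p > 0$ are supported at the node, and the matrix-factorization description there, combined with the cohomological hypotheses, yields $\text{Ext}^1(L_i, L_i) = 0$ and $\text{Ext}^1(L_1, L_2) = \text{Ext}^1(L_2, L_1) = k$. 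Thus the first-order deformation theory of the pair has no loops and exactly one arrow in each direction.

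I would then form the versal non-commutative deformation $F = F_1 \oplus F_2$ of $L$ governed by these $\text{Ext}^1$-groups, with obstructions read off from $\text{Ext}^2$. The two cross-extensions endow $R = \text{End}(F)$ with two orthogonal idempotents together with arrows $a\colon 1 \to 2$ and $b\colon 2 \to 1$, and the obstruction relations force $ab = ba = 0$; this identifies $R$ with the $4$-dimensional non-commutative algebra of the quiver with vertices $1,2$, arrows $a\colon 1 \to 2$, $b\colon 2 \to 1$, and relations $ab = ba = 0$. The crucial and hardest point is to check the remaining hypothesis of Theorem~\ref{main}, namely $\text{Hom}(F, F[p]) = 0$ for all $p \neq 0$. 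Vanishing for $p < 0$ is automatic for sheaves, and the deformation is designed to annihilate $\text{Ext}^1$; but the local extensions at the node are $2$-periodic, so one must show that the deformation simultaneously kills every higher self-extension. Here I would use that $F$ is, locally at the node, the module underlying the non-commutative crepant resolution attached to a small resolution of the conifold, which makes the higher self-extensions collapse.

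Granting $\text{Hom}(F, F[p]) = 0$ for $p \neq 0$ and $R = \text{End}(F)$ as above, Theorem~\ref{main} provides an SOD of $D^b(\text{coh}(X))$ one factor of which is the subcategory generated by $F$, equivalent to $D^b(\text{mod-}R)$, and the other factor of which is the saturated right orthogonal complement. Finally I would identify the first factor with the subcategory generated by $L_1, L_2$: each $F_i$ is an iterated extension of copies of $L_1, L_2$, so $F \in \langle L_1, L_2\rangle$, while $L_1, L_2$ arise as the simple $R$-modules and hence lie in $\langle F\rangle$; thus the two triangulated hulls coincide. Since $R$ is finite-dimensional, $D^b(\text{mod-}R)$ is saturated, so $\langle L_1, L_2\rangle$ is admissible with saturated right orthogonal complement, as claimed.
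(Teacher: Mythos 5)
Your proposal follows the same overall strategy as the paper: build the rank-2 locally free extensions $F_1,F_2$, take $F=F_1\oplus F_2$ as the versal NC deformation of $L=L_1\oplus L_2$ with $R=\text{End}(F)$ the $4$-dimensional algebra, feed this into Theorem~\ref{main}, and identify $\langle F\rangle=\langle L_1,L_2\rangle$. However, the two steps you treat as known facts are exactly where the paper has to work, and as you state them they are incorrect or unsupported. First, the claim that at the node ``the two non-Cartier Weil divisor classes give exactly the two indecomposable rank-$1$ reflexive modules, both of which are maximally Cohen--Macaulay'' is false: the local class group of $xy-zw=0$ is $\mathbf{Z}$, so there are infinitely many rank-$1$ reflexive modules, and only the classes $\pm 1$ (namely $(x,z)$ and $(x,w)$) are MCM; the class-$m$ module for $|m|\ge 2$ needs $|m|+1$ generators and is not MCM. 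Nothing in your argument rules out $L_i$ having local class of absolute value $\ge 2$, yet everything downstream --- MCM-ness of $L$, the computation $\text{Ext}^1(L_i,L_i)=0$ and $\text{Ext}^1(L_1,L_2)=k$, the existence of \emph{locally free} rank-$2$ extensions $F_i$ (hence perfectness of $F$), and the $4$-dimensionality of $R$ --- depends on the local classes being exactly $\pm 1$. This is precisely why the paper's Theorem 6.1 is stated with a $\mathbf{Q}$-factorialization $f:Y\to X$ and the hypotheses $(D_1,l)=1$, $(D_2,l)=-1$, and why it includes a Remark showing that the analogous rank-$2$ sequence fails for classes $m\ge 2$ (``the condition (1) of the theorem is necessary''). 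You would need to derive the local classes $\pm 1$ from the stated hypotheses (the shift-free generation condition at best forces both local MCM modules to occur among the stable images of $L_1,L_2$ in $D_{\text{sg}}$, which does not pin down the $L_i$ themselves), or else import the paper's intersection-theoretic hypotheses.

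Second, the crucial vanishing $\text{Hom}(F,F[p])=0$ for $p>0$ cannot be obtained from the NCCR appeal you make: locally at the node $F$ is \emph{free} (the conifold NCCR is $\text{End}(\mathcal{O}\oplus(x,z))$, not the endomorphism ring of a locally free module), so $\mathcal{E}xt^p(F,F)=0$ for $p>0$ is trivial, and the actual content is the global vanishing $H^p(X,\mathcal{H}om(F,F))=0$ for $p>0$. This does not follow formally from $H^p(X,\mathcal{H}om(L_i,L_j))=0$, because $\mathcal{H}om(F,F)$ is not an iterated extension of the sheaves $\mathcal{H}om(L_i,L_j)$: applying $\mathcal{H}om(-,L_j)$ to $0\to L_2\to F_1\to L_1\to 0$ produces $\mathcal{E}xt^1(L_i,L_j)$-terms supported at the node. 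The paper proves the vanishing by an explicit computation on the cone over $\mathbf{P}^1\times\mathbf{P}^1$ (the Lemma inside the proof of Theorem 6.1), establishing $2$-periodicity of the local Ext groups and then showing that the connecting maps such as $\text{Ext}^p(L_2,L_1)\to\text{Ext}^{p+1}(L_1,L_1)$ in the global long exact sequences are bijective for $p>0$; this argument is the real core of the proof and is absent from your proposal. You also never verify hypothesis (2) of Theorem~\ref{main}, flatness of $F$ over $R$, though that does follow from $F$ being the versal NC deformation. Your final bookkeeping --- $\langle F\rangle=\langle L_1,L_2\rangle$ via simple $R$-modules and extensions, and admissibility plus saturation of the complement from Theorem~\ref{main} --- is correct and agrees with the paper.
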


In \S7, we calculate some examples.
We give two examples where the assumptions of Theorem~\ref{ODP} are satisfied, 
and then we consider examples where the singularities are factorial ODP's.
In the latter case, non-commutative (NC) deformations (\cite{NC}) 
of $L$ do not terminate and do not yield suitable coherent sheaf $F$.
Indeed the versal deformation becomes a quasi-coherent sheaf corresponding to an infinite chain of 
coherent sheaves. 
In the appendix, we make a correction to an error in a cited paper \cite{NC} on NC deformations.

We assume that the base field $k$ is algebraically closed of characteristic $0$ in this paper.

\vskip 1pc

I would like to dedicate this paper to Professor Miles Reid for the long lasting friendship 
since he was a Pos Doc and 
I was a graduate student in Tokyo (he kindly corrected my English in my master's thesis at that time).

The author would like to thank Professor Keiji Oguiso for a help in Example~\ref{K3}, and 
Professor Alexander Kuznetsov for the correction in Appendix.

This work was partly done while the author stayed at Korea Advanced Institute 
of Science and Technology (KAIST) 
and National Taiwan University.
The author would like to thank Professor Yongnam Lee, Professor Jungkai Chen, Department of 
Mathematical Sciences of KAIST and National Center for Theoretical Sciences of Taiwan 
for the hospitality and 
excellent working conditions.

This work was partly supported by Grant-in-Aid for Scientific Research (A) 16H02141.
This was also partly supported by the National Science Foundation Grant DMS-1440140 
while the author stayed at the Mathematical Sciences Research Institute in Berkeley 
during the Spring 2019 semester.

%%%%%%%%%%%%%%%%%%%%%%%%%%
%%%%%%%%%%%%%%%%%%%%%%%%%%
%%%%%%%%%%%%%%%%%%%%%%%%%%
\section{Generators}

We collect some definitions and results concerning generators of categories 
and representabilities of functors.
$T$ denotes a triangulated category in this section.

A set of objects $E \subset T$ is said to be a {\em generator} 
if the right orthogonal complement defined by 
$E^{\perp} = \{A \in T \mid \text{Hom}_T(E,A[p]) = 0 \,\,\forall p\}$ is zero: $E^{\perp} = 0$.
$E$ is said to be a {\em classical generator} if $T$ coincides with the smallest triangulated subcategory
which contains $E$ and closed under direct summands. 
$E$ is saids to be a {\em strong generator} if there exists a number $n$ such that $T$ coincides with the 
full subcategory obtained from $E$ by taking finite direct sums, direct summands, shifts, 
and at most $n-1$ cones.

$T$ is said to be {\em Karoubian} if every projector splits.
The {\em idempotent completion} or the {\em Karoubian envelope} 
of a triangulated category is defined to be the category 
consisting of all kernels of all projectors.

A triangulated full subcategory $B$ (resp. $C$) of $T$ is said to be {\em right (resp. left) admissible} 
if the natural functor 
$F: B \to T$ (resp. $F': C \to T$) has a right (resp. left) adjoint functor $G: T \to B$ (resp. $G': T \to C$).
An expression 
\[
T = \langle C,B \rangle
\]
is said to be a {\em semi-orthogonal decomposition (SOD)} 
if $B,C$ are triangulated full subcategories such that $\text{Hom}_T(b,c) = 0$ 
for any $b \in B$ and $c \in C$ 
and such that, for any $a \in T$, there exists a distinguished 
triangle $b \to a \to c \to b[1]$ for some $b \in B$ and $c \in C$. 
In this case, $B$ (resp. $C$) is a right (resp. left) admissible subcategory.
Conversely, if $B$ (resp. $C$) is a right (resp. left) admissible subcategory, then there is a semi-orthogonal
decomposition $T = \langle C,B \rangle$ for $C = B^{\perp}$ (resp. $B = {}^{\perp}C$)  (\cite{Bondal}).

$T$ is said to be {\em Hom-finite} if $\sum_i \dim \text{Hom}(A,B[i]) < \infty$ for any objects $A,B \in T$.
A cohomological functor $H: T^{\text{op}} \to (\text{Mod-}k)$ is said to be of {\em finite type}
if $\sum_i  \dim H(A[i]) < \infty$ for any object $A \in T$.
A Hom-finite triangulated category $T$ is said to be {\em right saturated} 
if any cohomological functor $H$ 
of finite type is representable by some object $B \in T$.
A right saturated full subcategory of a Hom-finite triangulated category is automatically right admissible 
and yields a semi-orthogonal decomposition (\cite{Bondal}).
In a similar way, we define homological functor of finite type 
and a left saturated category which is automatically left admissible.
If $T$ is Hom-finite, has a strong generator, and Karoubian, then $T$ is right saturated 
(\cite{BvdB} Theorem 1.3).

Let $T$ be a triangulated category which has arbitrary coproducts (e.g., infinite direct sums).
An object $A \in T$ is said to be {\em compact} if 
\[
\coprod_{\lambda} \text{Hom}(A,B_{\lambda}) \cong \text{Hom}(A, \coprod_{\lambda} B_{\lambda})
\]
for all coproducts $\coprod_{\lambda} B_{\lambda}$.
We denote by $T^c$ the full subcategory of $T$ consisting of all compact objects.
If $X$ is a quasi-separated and quasi-compact scheme and $T = D(\text{Qcoh}(X))$, 
then $T^c = \text{Perf}(X)$, the triangulated subcategory of 
perfect complexes (\cite{BvdB} Theorem 3.1.1).

$T$ is said to be {\em compactly generated} if $(T^c)^{\perp} = 0$.
If $T$ is compactly generated, then $E \in T^c$ classically generates $T^c$ 
if and only if $E$ generates $T$
(\cite{RN}, \cite{BvdB} Theorem 2.1.2).

\begin{Thm}[\cite{Neeman} Theorem 4.1 (Brown representability theorem)]
Let $T$ be a compactly generated triangulated category and $T'$ be another triangulated category.
Let $F: T \to T'$ be an exact functor which commutes with coproduct:
\[
\coprod_{\lambda} F(A_{\lambda}) \cong F(\coprod_{\lambda} A_{\lambda}).
\]
Then there exists a right adjoint functor $G: T' \to T$:
\[
\text{Hom}_{T'}(A,G(B)) \cong \text{Hom}_T(F(A),B).
\]
\end{Thm}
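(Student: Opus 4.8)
The plan is to reduce the existence of the right adjoint to a representability statement, and then to prove that representability by a homotopy-colimit construction, using only the hypotheses that $T$ is compactly generated and that $F$ commutes with coproducts.

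First I would reduce. By the Yoneda lemma, to construct $G$ it suffices to show that for each object $B \in T'$ the contravariant functor
\[
H_B = \text{Hom}_{T'}(F(-),B) \colon T^{\mathrm{op}} \to (\mathrm{Ab})
\]
is representable; one then sets $G(B)$ to be a representing object, and both the functoriality of $G$ and the natural adjunction isomorphism follow formally from the uniqueness of representing objects. Since $F$ is exact and $\text{Hom}_{T'}(-,B)$ is cohomological, $H_B$ is a cohomological functor; and since $F$ commutes with coproducts whereas $\text{Hom}_{T'}(-,B)$ turns coproducts into products, $H_B$ sends coproducts in $T$ to products of abelian groups. Thus everything reduces to the core assertion: every cohomological functor $H \colon T^{\mathrm{op}} \to (\mathrm{Ab})$ that sends coproducts to products is representable when $T$ is compactly generated.

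Next I would build a representing object. Fix a set $\mathcal{G}$ of compact generators, closed under shifts. I would produce $X$ as a homotopy colimit of a sequence $X_0 \to X_1 \to \cdots$ carrying compatible ``universal'' elements $\xi_n \in H(X_n)$. Start with $X_0 = \coprod_{(G,x)} G$, the coproduct over all pairs $(G,x)$ with $G \in \mathcal{G}$ and $x \in H(G)$; the product decomposition $H(X_0) = \prod_{(G,x)} H(G)$ supplies an element $\xi_0$ whose components are the chosen $x$, so that the map $\text{Hom}(G,X_0) \to H(G)$, $f \mapsto H(f)(\xi_0)$, is surjective for every $G \in \mathcal{G}$. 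Given $(X_n,\xi_n)$, I would collect all maps $f \colon G \to X_n$ with $G \in \mathcal{G}$ and $H(f)(\xi_n)=0$, form the triangle $\coprod_f G \to X_n \to X_{n+1} \to (\coprod_f G)[1]$ with first map having components $f$, and use the long exact cohomology sequence: the image of $\xi_n$ in $H(\coprod_f G) = \prod_f H(G)$ vanishes componentwise, so $\xi_n$ lifts to some $\xi_{n+1} \in H(X_{n+1})$ restricting to $\xi_n$.

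Then I would pass to the limit and check the generators. Define $X$ by the triangle $\coprod_n X_n \xrightarrow{1-\mathrm{sh}} \coprod_n X_n \to X \to (\coprod_n X_n)[1]$. Applying $H$ and using $H(\coprod_n X_n) = \prod_n H(X_n)$, the compatible system $(\xi_n)$ lies in the kernel of $1-\mathrm{sh}^{*}$ and hence lifts to an element $\xi \in H(X)$, inducing a natural transformation $\eta \colon \text{Hom}(-,X) \to H$. For $G \in \mathcal{G}$, compactness gives $\text{Hom}(G,X) = \mathrm{colim}_n \text{Hom}(G,X_n)$; surjectivity of $\eta_G$ is inherited from the stage $X_0$, and injectivity holds because any $f \colon G \to X$ factors through some $X_n$, and if $H(f)(\xi)=0$ then its factor through $X_n$ lies among the maps killed at stage $n$ and so becomes zero in $X_{n+1}$.

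Finally I would bootstrap from $\mathcal{G}$ to all of $T$. Both $\text{Hom}(-,X)$ and $H$ are cohomological and send coproducts to products, so the full subcategory of objects on which $\eta$ is an isomorphism is triangulated (five lemma applied to the two long exact sequences) and closed under coproducts; since it contains $\mathcal{G}$ and $T$ is compactly generated, it equals $T$. Hence $H \cong \text{Hom}(-,X)$ is representable, which yields $G$. The hard part will be the passage to the homotopy colimit: one must verify that the elements $\xi_n$ genuinely assemble into a single $\xi \in H(X)$ and that surjectivity survives the limit, and this is precisely where compactness of the generators (to commute $\text{Hom}(G,-)$ past the colimit) and the coproduct-to-product property of $H$ (to control the relevant $\lim^{1}$ term and kill any phantom obstruction) are indispensable.
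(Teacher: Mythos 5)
The paper gives no proof of this statement—it is quoted verbatim from the cited source (Neeman, Theorem 4.1)—and your argument is precisely Neeman's original one: reduce the existence of the adjoint to representability of the cohomological, coproduct-to-product functor $\mathrm{Hom}_{T'}(F(-),B)$, then prove Brown representability via the inductive construction of $(X_n,\xi_n)$ and the homotopy colimit triangle $\coprod_n X_n \xrightarrow{1-\mathrm{sh}} \coprod_n X_n \to X$; all the key verifications (lifting $\xi$, $\mathrm{Hom}(G,X)\cong\mathrm{colim}_n\mathrm{Hom}(G,X_n)$ for compact $G$, surjectivity from stage $0$ and injectivity from the killing stages) are carried out correctly. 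The one step you assert rather than prove—that a triangulated subcategory closed under coproducts and containing the compact generators is all of $T$—is a standard fact (itself obtainable by running your same construction on the representable functor $\mathrm{Hom}(-,x)$ and noting the cone of $X\to x$ is right orthogonal to the generators), so this is not a gap.
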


%%%%%%%%%%%%%%%%%%%%%%%%%%
%%%%%%%%%%%%%%%%%%%%%%%%%%
%%%%%%%%%%%%%%%%%%%%%%%%%%
\section{Orlov's triangulated category of singularities}

We recall Orlov's theory of the triangulated category of singularities. 
Let $X$ be a separated noetherian scheme of finite Krull dimension whose category of coherent sheaves 
has enough locally free sheaves, e.g., a quasi-projective variety.
Orlov defined a {\em triangulated category of singularities} $D_{\text{sg}}(X)$ 
as the quotient category of the bounded 
derived category of coherent sheaves $D^b(\text{coh}(X))$ 
by the category of perfect complexes $\text{Perf}(X)$:
$D_{\text{sg}}(X) = D^b(\text{coh}(X))/\text{Perf}(X)$.

The triangulated category of singularities behaves well when $X$ is Gorenstein:

\begin{Prop}[\cite{Orlov1} Proposition 1.23]\label{Orlov-Prop1}
Assume that $X$ is Gorenstein.
Then any object in $D_{\text{sg}}(X)$ is isomorphic to the image of a coherent sheaf $F$ such that 
$\mathcal{H}om(F,\mathcal{O}_X[i]) = 0$ for all $i > 0$.
\end{Prop}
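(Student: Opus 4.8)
The plan is to prove the statement in two halves: first that every object of $D_{\text{sg}}(X)$ is isomorphic to a shift of a single coherent sheaf, and then that this shift and this sheaf can be chosen so as to land on a maximal Cohen--Macaulay sheaf, for which the required vanishing is automatic over a Gorenstein scheme.

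First I would represent an object $A \in D_{\text{sg}}(X)$ by an honest complex and strip away its perfect part. Since $\text{coh}(X)$ has enough locally free sheaves, $A$ is quasi-isomorphic to a complex $P^{\bullet}$ of locally free sheaves that is bounded above, with cohomology concentrated in a finite range of degrees $[a,b]$ but with terms extending infinitely to the left. For an integer $N$ the stupid truncations fit into a short exact sequence of complexes
\[
0 \to \sigma_{\le -N-1}P^{\bullet} \to P^{\bullet} \to \sigma_{\ge -N}P^{\bullet} \to 0,
\]
hence into a distinguished triangle. The quotient $\sigma_{\ge -N}P^{\bullet}$ is a \emph{bounded} complex of locally free sheaves, so it is perfect and therefore zero in $D_{\text{sg}}(X)$; consequently $A \cong \sigma_{\le -N-1}P^{\bullet}$ there. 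Taking $N$ large, the acyclicity of $P^{\bullet}$ below degree $a$ forces $\sigma_{\le -N-1}P^{\bullet}$ to have a single nonzero cohomology sheaf, the syzygy $Z = \ker(d^{-N}) = \operatorname{im}(d^{-N-1})$, placed in degree $-N-1$; thus $A \cong Z[N+1]$ in $D_{\text{sg}}(X)$.

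Next I would use the Gorenstein hypothesis to upgrade $Z$. Working stalkwise over the Gorenstein local rings $\mathcal{O}_{X,x}$, I would feed the short exact sequences $0 \to \ker d^{\,j} \to P^{j} \to \ker d^{\,j+1} \to 0$, valid throughout the acyclic range, into the depth lemma $\operatorname{depth} A' \ge \min(\operatorname{depth} B', \operatorname{depth} C' + 1)$; since each $P^{j}$ is free of maximal depth, iterating from degree $-N$ upward raises the depth bound by one at each step and yields $\operatorname{depth} Z_x \ge \min(\dim \mathcal{O}_{X,x}, N + a - 1)$. Hence for $N \gg 0$ the sheaf $Z$ is maximal Cohen--Macaulay. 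Because $\mathcal{O}_X$ is locally a dualizing module on a Gorenstein scheme, local duality then gives $\mathcal{E}xt^{i}(Z, \mathcal{O}_X) = 0$ for $i > 0$, which is exactly $\mathcal{H}om(Z, \mathcal{O}_X[i]) = 0$ for $i > 0$.

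Finally I would absorb the shift $[N+1]$ by passing to cosyzygies, which preserve the maximal Cohen--Macaulay property over a Gorenstein scheme. For an MCM sheaf $G$ the dual $G^{\vee} = \mathcal{H}om(G, \mathcal{O}_X)$ is again MCM; choosing a surjection onto $G^{\vee}$ from a locally free sheaf and dualizing produces an exact sequence $0 \to G \to E \to \Sigma G \to 0$ with $E$ locally free and $\Sigma G = (\Omega G^{\vee})^{\vee}$ again MCM. In $D_{\text{sg}}(X)$ the middle term vanishes, so $\Sigma G \cong G[1]$. Applying this $N+1$ times to $Z$ produces a genuine coherent sheaf $F = \Sigma^{N+1} Z$ that is MCM, satisfies $\mathcal{H}om(F, \mathcal{O}_X[i]) = 0$ for $i > 0$, and is isomorphic to $Z[N+1] \cong A$ in $D_{\text{sg}}(X)$. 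The routine part is the truncation bookkeeping; the hard part will be the commutative-algebra core --- controlling the depth of a high syzygy and invoking Gorenstein (local) duality for the $\mathcal{E}xt$-vanishing --- together with checking that this vanishing is stable under the cosyzygy operation used to remove the shift.
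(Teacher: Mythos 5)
The paper itself gives no proof of this proposition—it is quoted directly from Orlov (\cite{Orlov1}, Proposition 1.23)—and your argument is correct and is essentially Orlov's own: brutal truncation of a bounded-above locally free resolution to identify the object with a shifted syzygy sheaf in $D_{\text{sg}}(X)$, a depth-lemma count showing that high syzygies over a Gorenstein (hence Cohen--Macaulay) local ring are maximal Cohen--Macaulay, Gorenstein local duality to convert MCM into $\mathcal{E}xt^i(F,\mathcal{O}_X)=0$ for $i>0$, and MCM cosyzygies $0 \to G \to E \to \Sigma G \to 0$ to absorb the shift. The only slip is cosmetic: with cohomological conventions (differential raising degree) the brutal truncation $\sigma_{\ge -N}P^{\bullet}$ is the subcomplex and $\sigma_{\le -N-1}P^{\bullet}$ the quotient, so your short exact sequence of complexes should be written in the opposite order; the resulting distinguished triangle, and hence the isomorphism $A \cong \sigma_{\le -N-1}P^{\bullet}$ in $D_{\text{sg}}(X)$, is unaffected.
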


If $X$ is Gorenstein, then the natural morphism 
$F \to R\mathcal{H}om(R\mathcal{H}om(F,\mathcal{O}_X),\mathcal{O}_X)$ 
is an isomorphism.
If $(R,M)$ is a Gorenstein complete local ring of dimension $d$, then the local duality theorem says that 
\[
\text{Ext}^i_R(F,R) \cong \text{Hom}_R(H_M^{d-i}(F),E) 
\]
for any $R$-module $F$, where $E$ is an injective hull of $k = R/M$.
Thus the condition $\mathcal{H}om(F,\mathcal{O}_X[i]) = 0$ for all $i > 0$ is equivalent to saying that 
$F$ is a maximally Cohen-Macaulay (MCM) sheaf.

\begin{Thm}[\cite{Orlov1} Proposition 1.21]\label{Orlov-Prop2}
Assume that $X$ is Gorenstein.
Let $F$ be coherent sheaf such that $\mathcal{H}om(F,\mathcal{O}_X[i]) = 0$ for all $i > 0$.
Let $G \in D^b(\text{coh}(X))$, and let
$N$ be an integer such that $\text{Hom}(P,G[i]) = 0$ for all locally free sheaves $P$ and all $i > N$, 
e.g., $N = 0$ if $G$ is a sheaf and $X$ is affine.
Then 
\[
\text{Hom}_{D_{\text{sg}}}(F,G[N]) \cong \text{Hom}_{D^b(\text{coh}(X))}(F,G[N])/R
\]
where $R$ is the subspace of morphisms which factor through locally free sheaves $P$ such as 
$F \to P \to G[N]$.
\end{Thm}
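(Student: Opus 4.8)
The plan is to analyze the localization functor directly via the calculus of fractions for the Verdier quotient $D_{\text{sg}}(X) = D^b(\text{coh}(X))/\text{Perf}(X)$, and to prove that the natural map
\[
\alpha \colon \text{Hom}_{D^b(\text{coh}(X))}(F,G[N]) \longrightarrow \text{Hom}_{D_{\text{sg}}}(F,G[N])
\]
is surjective with kernel exactly $R$. I would begin from the standard fact that a morphism of $D^b(\text{coh}(X))$ becomes zero in $D_{\text{sg}}(X)$ if and only if it factors through an object of $\text{Perf}(X)$. Since a single locally free sheaf is a perfect complex and hence vanishes in $D_{\text{sg}}(X)$, this immediately gives $R \subseteq \ker \alpha$, and it reduces the whole statement to two tasks: showing $\alpha$ is surjective, and showing that any $\phi\colon F \to G[N]$ which factors through \emph{some} perfect complex already factors through a single locally free sheaf in cohomological degree $0$.

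The engine for both tasks is a locally free resolution $\cdots \to P_1 \to P_0 \to F \to 0$, which exists because $\text{coh}(X)$ has enough locally free sheaves. Because $F$ is maximal Cohen--Macaulay and $X$ is Gorenstein, the syzygy sheaves $F_k$, defined by short exact sequences $0 \to F_{k+1}\to P_k \to F_k \to 0$ with $F_0 = F$, are again maximal Cohen--Macaulay, and each such sequence gives a distinguished triangle whose middle term $P_k$ is perfect; the connecting morphism therefore induces an isomorphism $F \xrightarrow{\sim} F_k[k]$ in $D_{\text{sg}}(X)$. These triangles are what let me refine any roof representing a class of $\text{Hom}_{D_{\text{sg}}}(F,G[N])$: by composing with the syzygy triangles and applying the octahedral axiom, I can peel the terms of the cone of a roof off one at a time and control in which cohomological degrees the cone is concentrated.

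For surjectivity I would represent a class by a left roof $F \xleftarrow{s} W \xrightarrow{g} G[N]$ with cone $C$ perfect, fitting into a triangle $C[-1]\to W \xrightarrow{s} F \to C$. Applying $\text{Hom}(-,G[N])$ shows that $g$ lifts to a genuine morphism $F \to G[N]$ precisely when its image in $\text{Hom}(C[-1],G[N]) = \text{Hom}(C,G[N+1])$ vanishes. After refining so that $C$ is a single locally free sheaf, this obstruction group is $\text{Hom}(\text{locally free},G[N+1])$, which is $0$ by the boundedness hypothesis because $N+1 > N$; hence every class lifts and $\alpha$ is surjective. For the reverse inclusion $\ker\alpha \subseteq R$, I would take $\phi\colon F \to G[N]$ factoring through a perfect complex $C$, write $C$ as a bounded complex of locally free sheaves, and examine each truncation piece $P^m[-m]$: on the target side the maps $\text{Hom}(P^m[-m],G[N]) = \text{Hom}(P^m,G[N+m])$ vanish for $m>0$ by the bound, while on the source side the maps $\text{Hom}(F,P^m[-m]) = \text{Ext}^{-m}(F,P^m)$ vanish for $m<0$ because $F$ is a sheaf. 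These two ranges overlap only at $m=0$, forcing the factorization through the single locally free sheaf $P^0$, which is exactly membership in $R$.

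The step I expect to be the main obstacle is making the degree isolation rigorous on a non-affine $X$. There the source-side groups $\text{Ext}^{-m}(F,P^m)$ for $m<0$ involve $H^{>0}(X,\mathcal{H}om(F,P^m))$, which need not vanish, so pinning the surviving contribution to degree $0$ genuinely requires the maximal Cohen--Macaulay condition $\mathcal{E}xt^{>0}(F,\mathcal{O}_X)=0$ and the precise bound $N$ working together, rather than either hypothesis alone; the affine case, where $H^{>0}$ vanishes and one may take $N=0$, is the transparent model that the general truncation--refinement bookkeeping must reproduce uniformly.
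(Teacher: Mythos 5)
Your overall frame (calculus of fractions, the obstruction living in $\text{Hom}(C,G[N+1])$ via the roof triangle, d\'evissage against the bound $N$) is the right one, but the engine you chose cannot drive it, and this is a genuine gap rather than a technicality. (For context: the paper does not reprove this statement; its ``proof'' is the remark that Orlov's proof of \cite{Orlov1} Proposition 1.21 never uses the assumption that $G$ is a sheaf. So the real comparison is with Orlov's argument.) That argument runs on a locally free \emph{co}resolution $0 \to F \to P^0 \to P^1 \to \cdots$, obtained by dualizing a locally free resolution of $\mathcal{H}om(F,\mathcal{O}_X)$; its exactness is precisely where Gorenstein plus maximally Cohen--Macaulay enter. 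The coresolution yields morphisms $v_k\colon F^k[-k]\to F$ (cosyzygies $F^k$, again MCM) whose cones are the perfect complexes $C_k=(P^0\to\cdots\to P^{k-1})$ concentrated in degrees $[0,k-1]$. Your syzygy triangles $0\to F_{k+1}\to P_k\to F_k\to 0$ do the opposite: the morphism with perfect cone they produce is the connecting map $F\to F_k[k]$, a map \emph{out of} $F$. A left roof $F\xleftarrow{s}W\to G[N]$ can only be refined by precomposing $s$, i.e.\ by morphisms \emph{into} $F$ with perfect cone (the map $P_k\to F_k$ does point the right way, but its cone $F_{k+1}[1]$ is not perfect). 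Transporting the problem along $F\cong F_k[k]$ instead replaces $(F,N)$ by $(F_k,N-k)$, for which one would need $\text{Hom}(P,G[i])=0$ for all $i>N-k$ --- strictly more than the hypothesis gives; the shift goes the wrong way.

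Two concrete statements are moreover false or unjustified. First, in the kernel computation you claim $\text{Hom}(F,P^m[-m])=\text{Ext}^{-m}(F,P^m)=0$ for $m<0$ ``because $F$ is a sheaf''; for $m<0$ these are \emph{positive} Ext groups, equal to $H^{-m}(X,\mathcal{H}om(F,P^m))$ by the MCM condition, and they need not vanish on projective $X$ --- it is for $m>0$ that they vanish (negative Exts of sheaves). So both of your vanishing ranges are $m>0$, they do not complement each other, and the isolation at $m=0$ collapses. Second, the surjectivity step ``after refining so that $C$ is a single locally free sheaf'' is exactly the step that cannot be performed: removing the negative-degree part of an arbitrary cone $Q$ requires the composite $F\to Q\to\sigma_{<0}Q$ to vanish, which lands in the same groups $H^{>0}(X,\mathcal{H}om(F,\cdot))$; neither MCM nor the bound $N$ controls them (the bound concerns maps from locally free sheaves \emph{into} $G$, not maps from $F$ into locally free sheaves), so ``the two hypotheses working together'' will not rescue this. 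Orlov's proof never peels an arbitrary cone: one proves the system $\{v_k\}$ is cofinal among roofs, because $\text{Hom}(F^k[-k],Q)=\text{Hom}(F^k,Q[k])=0$ once $k>\dim X+b$ (with $b$ the top degree of $Q$), since $\text{Ext}^j(F^k,P)\cong H^j(X,\mathcal{H}om(F^k,P))=0$ for $j>\dim X$ when $F^k$ is MCM and $P$ locally free. After that reduction the cones are the complexes $C_k$, concentrated in degrees $\ge 0$ --- single locally free sheaves are neither achievable nor needed --- and your own d\'evissage finishes: $\text{Hom}(C_k,G[N+1])=0$ gives surjectivity, and $\text{Hom}(\sigma_{\ge 1}C_k,G[N])=0$ factors any morphism killed by $v_k$ through the single sheaf $P^0$, giving $\ker\alpha\subseteq R$.
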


\begin{proof}
\cite{Orlov1} Proposition 1.21 assumes that $G$ is a sheaf.
But this assumption is not used in the proof.
We note that $N$ depends on $G$.
\end{proof}

\begin{Thm}[Kn\"orrer periodicity \cite{Orlov1} Theorem 2.1]\label{Knorrer}
Let $V$ be a separated regular noetherian scheme of finite Krull dimension 
(e.g., a smooth quasi-projective variety)
and let $f: V \to \mathbf{A}^1$ be a flat morphism.
Let $W = V \times \mathbf{A}^2$ and let $g = f + xy: W \to \mathbf{A}^1$ 
for coordinates $(x, y)$ on $\mathbf{A}^2$.
Let $X$ (resp. $Y$) be the fiber of $f$ (resp. $g$) above $0$.
Let $Z = \{f = x = 0\} \subset Y$, and let $i: Z \to Y$ and $q: Z \to X$ be natural morphisms.
Then $\Phi = Ri_*q^*: D^b(\text{coh}(X)) \to D^b(\text{coh}(Y))$ induces an equivalence
$\bar \Phi: D_{\text{sg}}(X) \to D_{\text{sg}}(Y)$.
\end{Thm}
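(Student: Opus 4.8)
The plan is to identify $\bar\Phi$ with the classical Knörrer tensor operation on matrix factorizations and to prove it is an equivalence by a reduction argument, using the descriptions of objects and morphisms in $D_{\text{sg}}$ supplied by Proposition~\ref{Orlov-Prop1} and Theorem~\ref{Orlov-Prop2}. First I would record the relevant geometry. Since $W = V \times \mathbf{A}^2$ is regular, $Y = \{g = 0\}$ is a Gorenstein hypersurface, and $x$ is a non-zero-divisor on $Y$: if $xh \in (g)$ in $\mathcal{O}_V[x,y]$ then, because $f$ is independent of $x$, comparing $x$-divisibility forces $h \in (g)$. Hence $Z = \{x = 0\} \subset Y$ is a principal Cartier divisor and $i$ is a regular closed immersion of codimension one, while $q : Z \cong X \times \mathbf{A}^1_y \to X$ is a flat projection. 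Therefore $q^*$ is exact and preserves perfect complexes, $Ri_*$ preserves perfect complexes because $i$ is a regular immersion, and $\Phi$ sends $\mathrm{Perf}(X)$ into $\mathrm{Perf}(Y)$ and descends to the functor $\bar\Phi : D_{\text{sg}}(X) \to D_{\text{sg}}(Y)$.

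Next I would pass to matrix factorizations. By Proposition~\ref{Orlov-Prop1} an object of $D_{\text{sg}}(X)$ is represented by an MCM sheaf $M$, which over the hypersurface $X = \{f = 0\}$ is the cokernel of a matrix factorization $(p_0, p_1)$ of $f$, namely $p_1 p_0 = p_0 p_1 = f \cdot \mathrm{id}$ and $M = \mathrm{coker}(p_1)$, with the $2$-periodic free resolution $\cdots \xrightarrow{p_0} P_1 \xrightarrow{p_1} P_0 \to M \to 0$. The sheaf $\Phi(M) = i_* q^* M$ is supported on $Z$ and so is not itself MCM on $Y$; computing its MCM model (stabilizing its free resolution over $\mathcal{O}_Y$) --- equivalently, forming the tensor product of $(p_0, p_1)$ with the rank-one matrix factorization $(x, y)$ of $xy$ --- identifies the image of $\Phi(M)$ in $D_{\text{sg}}(Y)$ with the cokernel of the block matrix factorization
\[
\left(\ \begin{pmatrix} p_0 & x \\ y & -p_1 \end{pmatrix},\ \begin{pmatrix} p_1 & x \\ y & -p_0 \end{pmatrix}\ \right)
\]
of $g = f + xy$, whose two blocks multiply to $(f + xy)\cdot \mathrm{id}$. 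Thus $\bar\Phi$ is the Knörrer tensor functor.

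It then remains to prove this functor is fully faithful and essentially surjective. For essential surjectivity I would represent an arbitrary object of $D_{\text{sg}}(Y)$ by an MCM sheaf, hence by a matrix factorization of $g$, and apply the Knörrer reduction lemma: using that $x, y$ form a regular sequence, one clears the $x$- and $y$-entries by invertible row and column operations, absorbing contractible summands of the form $(\mathrm{id}, g)$ and $(g, \mathrm{id})$, to bring any factorization of $f + xy$ into the standard extended block form above, thereby exhibiting it as $\bar\Phi(M)$ with $M = \mathrm{coker}(p_1)$. For full faithfulness I would compute the graded morphism spaces by Theorem~\ref{Orlov-Prop2}, realizing $\mathrm{Hom}_{D_{\text{sg}}}$ as homotopy classes of morphisms of matrix factorizations modulo those factoring through free modules; the same elimination of the $x$- and $y$-entries then shows that tensoring by $(x, y)$ induces a bijection on these classes, so $\bar\Phi$ is an isomorphism on all $\mathrm{Hom}_{D_{\text{sg}}}(-, -[p])$.

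The hard part is the reduction lemma underlying essential surjectivity: one must show that \emph{every} matrix factorization of $f + xy$, not merely those already in extended form, becomes isomorphic in the stable category to one built from a factorization of $f$. This is the genuinely computational core of Knörrer periodicity, and it relies decisively on the special shape of the perturbation $xy$, which allows the $x$- and $y$-columns to be eliminated by elementary operations over the local ring. The non-properness of $q$ is what forces this matrix-factorization route: one cannot simply produce a quasi-inverse as a naive adjoint on $D^b$, and the whole comparison must be carried out at the level of the singularity category through its stable/matrix-factorization model.
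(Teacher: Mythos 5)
The first thing to say is that the paper contains no proof of this statement: Theorem~\ref{Knorrer} is quoted from \cite{Orlov1} (Theorem 2.1) and used as a black box, so your attempt can only be measured against Orlov's own argument. Your preliminary step is correct and would appear in any treatment: $x$ is a non-zero-divisor on $Y$ (your divisibility argument is right, using that $f$ is a non-zero-divisor by flatness), hence $i$ is a codimension-one regular closed embedding, $q$ is the flat projection $X\times\mathbf{A}^1\to X$, so $\Phi$ preserves perfect complexes and descends to $\bar\Phi$. Beyond that, the routes diverge: Orlov proves the equivalence by working directly with the functor $Ri_*q^*$ and the singularity-category formalism of his Section~1 (adjunctions, the computation of morphisms in $D_{\text{sg}}$ that appears here as Theorem~\ref{Orlov-Prop2}); matrix factorizations do not enter his paper until a later section, after Theorem 2.1 is already proved. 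Your plan instead transports the whole problem into matrix factorizations and appeals to the classical local-algebra form of Kn\"orrer's theorem.

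The genuine gap is that the decisive step of your argument is asserted rather than proved. Both your essential surjectivity and your full faithfulness are reduced to the ``reduction lemma'' that every matrix factorization of $f+xy$ can be brought, by invertible row and column operations absorbing trivial summands, to the block form $\bigl(\begin{smallmatrix} p_0 & x \\ y & -p_1 \end{smallmatrix}\bigr)$. You correctly flag this as the computational core, but you leave it entirely unproved, and this lemma \emph{is} the theorem: it is not a routine elimination. The proofs in the literature use genuinely more structure --- Kn\"orrer's original argument passes through the double branched cover $f+z^2$ and a $\mathbf{Z}/2$-equivariant analysis over complete local rings, and later treatments (Yoshino's book, Dyckerhoff's dg-Morita approach) construct adjoint functors and prove the unit and counit are stable isomorphisms rather than performing matrix reduction; it is not clear that naive row/column operations suffice as you state. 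Moreover, in the present global setting ($V$ an arbitrary regular scheme, not a complete local ring) even the identifications you take for granted --- that objects of $D_{\text{sg}}(X)$ correspond to global matrix factorizations of $f$, and that the MCM stabilization of $i_*q^*M$ is the tensor factorization --- are theorems requiring proof (the matrix-factorization description of $D_{\text{sg}}$ is established by Orlov only \emph{after} Kn\"orrer periodicity, so using it here risks circularity in the cited framework). Finally, your closing claim that the non-properness of $q$ ``forces'' the matrix-factorization route is mistaken: the cited proof of Orlov never uses matrix factorizations at all.
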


\begin{Thm}[\cite{Orlov2} Theorem 2.10]
Let $X$ and $X'$ be quasi-projective varieties.
Assume that the formal completions $\hat X$ and $\hat X'$ along their singular loci are isomorphic.
Then the idempotent completions of the triangulated categories of singularities 
$\overline{D_{\text{sg}}(X)}$ and $\overline{D_{\text{sg}}(X')}$ are equivalent.
\end{Thm}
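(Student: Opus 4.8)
The plan is to show that the idempotent-completed singularity category $\overline{D_{\text{sg}}}$ depends only on a formal neighborhood of the singular locus, so that the hypothesis $\hat X \cong \hat{X'}$ instantly produces the desired equivalence. I would factor the comparison through two steps: a Zariski localization to an open neighborhood of $\text{Sing}(X)$, which I expect to be an honest equivalence, and then a completion along $\text{Sing}(X)$, which should become an equivalence only after idempotent completion.

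For the localization, fix an open immersion $j \colon U \hookrightarrow X$ with $\text{Sing}(X) \subset U$, and let $Z = X \setminus U$, a closed subset contained in the regular locus. Restriction is a Verdier localization $D^b(\text{coh}(U)) \simeq D^b(\text{coh}(X))/D^b_Z(\text{coh}(X))$ whose kernel $D^b_Z$ consists of complexes with cohomology supported on $Z$; as $X$ is regular along $Z$, every such complex is perfect, so $D^b_Z(\text{coh}(X)) \subseteq \text{Perf}(X)$. Moreover the full preimage in $D^b(\text{coh}(X))$ of $\text{Perf}(U)$ consists of complexes perfect on $U$, and such a complex is automatically perfect on all of $X$ — again because $Z$ is regular — so this preimage is exactly $\text{Perf}(X)$. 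The standard identity for iterated Verdier quotients then yields
\[
D_{\text{sg}}(U) = D^b(\text{coh}(U))/\text{Perf}(U) \;\simeq\; D^b(\text{coh}(X))/\text{Perf}(X) = D_{\text{sg}}(X),
\]
an equivalence already before idempotent completion, so I may freely replace $X$ by such a $U$.

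For the completion, I would compare $D_{\text{sg}}(U)$ with the singularity category $D_{\text{sg}}(\hat X)$ of the formal completion along $\text{Sing}(X)$, through the completion functor. Full faithfulness is a support computation: after localizing to the local rings at points of $\text{Sing}(X)$, the $D_{\text{sg}}$-morphism spaces are finitely generated modules supported on the singular locus, since over the regular locus all objects are perfect — in the Gorenstein case these are the stable homomorphism modules described by Theorem~\ref{Orlov-Prop2}. A finitely generated module supported on $\text{Sing}(X)$ is annihilated by a power of the ideal of $\text{Sing}(X)$ and is therefore unaffected by completion along it, so the completion functor is fully faithful. Feeding this, together with the localization step, into both $X$ and $X'$ and invoking the isomorphism $\hat X \cong \hat{X'}$ then gives
\[
\overline{D_{\text{sg}}(X)} \;\simeq\; \overline{D_{\text{sg}}(\hat X)} \;\simeq\; \overline{D_{\text{sg}}(\hat{X'})} \;\simeq\; \overline{D_{\text{sg}}(X')}.
\]

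The main obstacle is essential surjectivity up to direct summands for the completion functor: one must show every object of $D_{\text{sg}}(\hat X)$ is a summand of the completion of an object defined on $U$. Representing objects by maximally Cohen--Macaulay modules over the completed local rings, this is an algebraization question, which I would settle by an Artin-type approximation producing a module over an \'etale neighborhood whose completion recovers the prescribed formal object up to a direct summand. This is exactly the place where a strict equivalence cannot be expected, and the idempotent completion appearing in the statement is present precisely to absorb the extra summands. By contrast, once the morphisms in $D_{\text{sg}}$ are recognized as modules supported on $\text{Sing}(X)$, the full faithfulness in both steps is routine.
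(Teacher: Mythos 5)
The paper itself offers no proof of this statement: it is quoted verbatim from \cite{Orlov2}, so your proposal has to be measured against Orlov's argument. Your localization step is correct and is exactly Orlov's first reduction: since $Z = X \setminus U$ lies in the regular locus, complexes with cohomology supported on $Z$ are perfect, the full preimage of $\text{Perf}(U)$ is $\text{Perf}(X)$, and the iterated Verdier-quotient identity gives $D_{\text{sg}}(X) \simeq D_{\text{sg}}(U)$ honestly. The completion step, however, has genuine gaps, and they sit precisely where you declare things ``routine.'' For full faithfulness, your support argument shows at most that the \emph{algebraic} Hom-spaces are unchanged by completion; it says nothing about the target. A morphism $\hat F \to \hat G$ in $D_{\text{sg}}(\hat X)$ is a roof $\hat F \leftarrow H \to \hat G$ whose middle term $H$ is an arbitrary object of $D^b(\text{coh}(\hat X))$, and formal objects need not algebraize; to identify $\text{Hom}_{D_{\text{sg}}(\hat X)}(\hat F, \hat G)$ with $\text{Hom}_{D_{\text{sg}}(U)}(F,G)$ you must show every such roof can be straightened to one through an algebraizable (in practice, torsion) object, which is the actual content of the theorem. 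Moreover, the finiteness you invoke rests on Theorem~\ref{Orlov-Prop2}, a Gorenstein statement, while the theorem being proved assumes nothing of the sort; in general $\text{Hom}_{D_{\text{sg}}}(F,G)$ is a colimit of stable Homs of successive syzygies and need not be finitely generated, so ``supported on $\text{Sing}(X)$'' does not imply ``killed by a fixed power of the ideal.''

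The essential-surjectivity step has the same problem compounded: representing objects of $D_{\text{sg}}(\hat X)$ by MCM modules again requires Gorenstein (or at least Cohen--Macaulay) hypotheses; Artin approximation produces a module over an \emph{\'etale} neighborhood, not a Zariski open of $X$, so you would additionally need \'etale invariance of $\overline{D_{\text{sg}}}$, which you do not address and which is of essentially the same depth as the theorem itself; and algebraization of formal MCM modules, even up to summands, is delicate once the singular locus is positive-dimensional. Orlov's mechanism avoids all of this: he proves that every object of $D_{\text{sg}}(X)$ is a direct summand of the image of a complex with cohomology supported set-theoretically on $Z = \text{Sing}(X)$ --- this, not algebraization, is where the idempotent completion enters --- and that the induced functor $D^b_Z(\text{coh}(X))/\text{Perf}_Z(X) \to D_{\text{sg}}(X)$ is fully faithful. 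Complexes supported on $Z$ live on the infinitesimal neighborhoods $Z_n$, which are honest schemes intrinsic to $\hat X$, so the two sides are compared by d\'evissage, with no completion functor and no approximation theorem. To close your full-faithfulness gap you would in effect have to re-derive this torsion reduction, at which point your Artin-approximation step becomes superfluous.
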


%%%%%%%%%%%%%%%%%%%%%%%%%%
%%%%%%%%%%%%%%%%%%%%%%%%%%
%%%%%%%%%%%%%%%%%%%%%%%%%%
\section{Triangulated categories of singularities for ordinary double points}

We calculate triangulated categories of singularities for varieties with only ordinary double points.

\begin{Expl}[\cite{Orlov1} Subsection 3.3]
Let $X_n = \{x_0^2+\dots+x_n^2 = 0\} \subset \mathbf{A}^{n+1}$ 
be an ordinary double point of dimension $n$.
Then $D_{\text{sg}}(X_n) \cong D_{\text{sg}}(X_{n+2})$ by the Kn\"orrer periodicity 
(\cite{Orlov1} Theorem 2.1).

We consider $X_0$.
Let $A = k[z]/z^2$.
Then any object of $D_{\text{sg}}(X_0)$ is represented by a finite $A$-module.
Therefore it is a direct sum of $V_1 = k = A/(z)$.

A natural exact sequence $0 \to (z) \to A \to A/(z) \to 0$ yields an exact triangle
$V_1 \to A \to V_1 \to V_1[1]$ in $D^b(\text{coh}(X_0))$, hence an isomorphism
$V_1 \cong V_1[1]$ in $D_{\text{sg}}(X_0)$.

Therefore we have $\text{Hom}_{D_{\text{sg}}(X_0)}(V_1,V_1) \cong k$ 
which is generated by the identity $\text{Id}$.
It follows that $D_{\text{sg}}(X_0)$ is already idempotent complete.

The translation functor takes $V_1 \mapsto V_1$ and $\text{Id} \mapsto \text{Id}$.
The exact triangle are only trivial ones.
\end{Expl} 

\begin{Expl}
We consider $X_1$.

Let $B = k[z,w]/(zw)$.
Then any object of $D_{\text{sg}}(X_1)$ is represented by a finite $B$-module $M$ such that 
$\text{Hom}(M,B[i]) = 0$ for all $i > 0$.
Therefore it is a direct sum of $M_z = B/(w)$ and $M_w = B/(z)$.

A natural exact sequence $0 \to zB \to B \to B/(z) \to 0$ yields an exact triangle
$M_z \to B \to M_w \to M_z[1]$ in $D^b(\text{coh}(X_1))$, hence an isomorphism
$M_w \cong M_z[1]$ in $D_{\text{sg}}(X_1)$.
We also have $M_z \cong M_w[1]$ in $D_{\text{sg}}(X_1)$.

We have $\text{Hom}_{D^b(\text{coh}(X_1))}(M_z,M_z) \cong k[z]$.
Since the multiplication map $z: M_z \to M_z$ is factored as $M_z \cong zB \subset B \to B/(w)$, 
we have $z \in R$ in the notation of 
Theorem~\ref{Orlov-Prop2}.
Therefore we have $\text{Hom}_{D_{\text{sg}}(X_1)}(M_z,M_z) \cong k$ 
which is generated by the identity $\text{Id}_z$.

Since $\text{Hom}_{D^b(\text{coh}(X_1))}(M_z,M_w) = 0$, 
we have $\text{Hom}_{D_{\text{sg}}(X_1)}(M_z,M_w) = 0$. 
It follows that $D_{\text{sg}}(X_1)$ is already idempotent complete.

The translation functor takes $M_z \mapsto M_w$ and $\text{Id}_z \mapsto \text{Id}_w$.
The exact triangle are only trivial ones.
\end{Expl} 

\begin{Expl}
We consider another $1$-dimensional scheme $Y_1$ whose singularity is analytically 
isomorphic to that of $X_1$ but not algebraically.

Let $C = k[z,w]/(z^2+z^3 +w^2)$.
The completion of $Y_1 = \text{Spec}(C)$ at the singularity is isomorphic to that of $X_1$, hence  
$\overline{D_{\text{sg}}(Y_1)} \cong \overline{D_{\text{sg}}(X_1)}$.
But we will see that $D_{\text{sg}}(Y_1)$ is not equivalent to $D_{\text{sg}}(X_1)$
(\cite{Orlov2} Introduction).

Let $C' \to C$ be the normalization.
Then $C' \cong k[t]$ with $z = -t^2-1$ and $w = -t^3-t$.

Any object of $D_{\text{sg}}(Y_1)$ is represented by a finite $C$-module $N$ such that 
$\text{Hom}(N,C[i]) = 0$ for all $i > 0$.
Therefore it is a direct sum of $C'$.

There are $2$ points of $Y'_1=\text{Spec}(C') \cong \mathbf{A}^1$ above the singular point of $Y$; 
we have $(z,w)C' = P \cap Q$ for prime ideals $P = (t+\sqrt{-1})$ and $Q = (t-\sqrt{-1})$ of $C'$.
There is a surjective homomorphism $C^{\oplus 2} \to C'$ given by $(a,b) \mapsto a-bt$.
The kernel of this homomorphism is equal to $(w,z)C'$, which is isomorphic to $C'$ as a $C$-module.
Indeed we have $w-zt=0$ and $wt=-z^2-z$, etc.
Therefore we have an exact sequence 
\[
0 \to (w,z)C' \to C^{\oplus 2} \to C' \to 0
\]
yielding an exact triangle
$C' \to C^{\oplus 2} \to C' \to C'[1]$ in $D^b(\text{coh}(Y_1))$, hence an isomorphism
$C' \cong C'[1]$ in $D_{\text{sg}}(Y_1)$.

We have $\text{Hom}_{D^b(\text{coh}(Y_1))}(C',C') \cong C' \cong k[t]$ as $C$-modules, 
where a $C$-module homomorphism on the left hand side is mapped to the image of $1$ by the 
homomorphism.
We note that a $C$-homomorphism is determined by the image of $1$ because $C' \to C$ is birational.
Since the multiplication map $z: C' \to C'$ is factored as 
$C' \cong zC' \subset C \to C'$, we have $z \in R$ in the notation of 
Theorem~\ref{Orlov-Prop2}.
Therefore we have $\text{Hom}_{D_{\text{sg}}(Y_1)}(C',C') \cong k[t]/(t^2+1)$.

The translation functor takes $C' \mapsto C'$.
The exact triangle are only trivial ones.

There is an idempotent $(1 \pm \sqrt{-1}t)/2 \in \text{Hom}_{D_{\text{sg}}(Y_1)}(C',C')$.
But there is no corresponding sheaf on $Y_1$.
Therefore $D_{\text{sg}}(Y_1)$ is not idempotent complete.
The corresponding idempotent completion is equivalent to $D_{\text{sg}}(X_1)$. 
\end{Expl}

\begin{Expl}
We consider $X_2$.

We rewrite the equation of $X_2$ to $xy + z^2 = 0$.
There is an equivalence $\Phi_1: D_{\text{sg}}(X_0) \cong D_{\text{sg}}(X_2)$ (Theorem~\ref{Knorrer}) 
which is given as follows.
Let $Z_1 = \text{Spec}(k[y,z]/z^2)$.
Then there are natural morphisms $q_1: Z_1 \to X_0$ and $i_1: Z_1 \subset X_2$.
The equivalence is given by $\Phi_1 = Ri_{1*}q_1^*$.

Let $L = \{x = z = 0\}$ be a line on the surface $X_2$ through the origin.
$L$ is a prime divisor which is not a Cartier divisor, but $2L$ is a Cartier divisor.

Since $V_1 = A/(z)$, we have $\Phi_1(V_1) = Ri_{1*}(k[y,z]/(z)) = k[x,y,z]/(x,z) = \mathcal{O}_L$.
We have an exact sequence
\[
0 \to \mathcal{O}_{X_2}(-L) \to \mathcal{O}_{X_2} \to \mathcal{O}_L \to 0.
\]
Thus $\mathcal{O}_L \cong \mathcal{O}_{X_2}(-L)[1]$ in $D_{\text{sg}}(X_2)$.
Therefore $D_{\text{sg}}(X_2)$ is spanned by a reflexive sheaf $\mathcal{O}_{X_2}(-L)$ of rank $1$.

We note that $\mathcal{O}_L$ is a torsion sheaf, but $\mathcal{O}_{X_2}(-L)$ is a Cohen-Macaulay sheaf.

There is an exact sequence
\[
0 \to \mathcal{O}_{X_2}(-L) \to F \to \mathcal{O}_{X_2}(-L) \to 0
\]
for a locally free sheaf $F$ of rank $2$ (\cite{NC} Example 5.5).
Thus $\mathcal{O}_{X_2}(-L) \cong \mathcal{O}_{X_2}(-L)[1]$ in $D_{\text{sg}}(X_2)$.
\end{Expl}

\begin{Expl}
We consider $X_3$.
This is the case of a non-$\mathbf{Q}$-factorial $3$-fold.

We rewrite the equation of $X_3$ to $xy + zw = 0$.
There is an equivalence $\Phi_2: D_{\text{sg}}(X_1) \cong D_{\text{sg}}(X_3)$ (Theorem~\ref{Knorrer}) 
which is given as follows.
Let $Z_2 = \text{Spec}(k[y,z,w]/zw)$.
Then there are natural morphisms $q_2: Z_2 \to X_1$ and $i_2: Z _2 \subset X_3$.
The equivalence is given by $\Phi_2 = Ri_{2*}q_2^*$.

Let $L = \{x=w=0\}$ and $L' = \{x=z=0\}$ be planes on $X_3$ through the origin.
They are prime divisors which are not $\mathbf{Q}$-Cartier divisors.

Since $M_z = B/(w)$ and $M_w = B/(z)$, 
we have $\Phi_2(M_z) = Ri_{2*}(k[y,z,w]/(w)) = k[x,y,z,w]/(x,w) = \mathcal{O}_L$
and $\Phi_2(M_w) = Ri_{2*}(k[y,z,w]/(z)) = k[x,y,z,w]/(x,z) = \mathcal{O}_{L'}$
We have an exact sequence
\[
0 \to \mathcal{O}_{X_3}(-L) \to \mathcal{O}_{X_3} \to \mathcal{O}_L \to 0.
\]
Thus $\mathcal{O}_L \cong \mathcal{O}_{X_3}(-L)[1]$ in $D_{\text{sg}}(X_3)$.
We also have $\mathcal{O}_{L'} \cong \mathcal{O}_{X_3}(-L')[1]$ in $D_{\text{sg}}(X_3)$.
Therefore $D_{\text{sg}}(X_3)$ is spanned by reflexive sheaves $\mathcal{O}_{X_3}(-L)$ and 
$\mathcal{O}_{X_3}(-L')$ of rank $1$.

We note that $\mathcal{O}_L$ and $\mathcal{O}_{L'}$ are torsion sheaves, but $\mathcal{O}_{X_3}(-L)$ 
and $\mathcal{O}_{X_3}(-L')$ are 
Cohen-Macaulay sheaves.

There is an exact sequence
\[
0 \to \mathcal{O}_{X_3}(-L) \to F \to \mathcal{O}_{X_3}(-L') \to 0
\]
for a locally free sheaf $F$ of rank $2$ (\cite{NC} Example 5.6).
Thus $\mathcal{O}_{X_3}(-L') \cong \mathcal{O}_{X_3}(-L)[1]$ in $D_{\text{sg}}(X_3)$.
We also have $\mathcal{O}_{X_3}(-L) \cong \mathcal{O}_{X_3}(-L')[1]$ in $D_{\text{sg}}(X_3)$.
\end{Expl}

\begin{Expl}
We consider a $3$-dimensional scheme $Y_3$ which is analytically isomorphic to $X_3$ 
at the singular points but not algebraically.
$Y_3$ is $\mathbf{Q}$-factorial, hence locally factorial, because the fundamental group 
of the punctured neighborhood of the singularity is trivial.

Let $Y_3$ be a $3$-fold defined by an equation $xy+z^2+z^3+w^2 = 0$.
$Y_3$ has an ordinary double point which is $\mathbf{Q}$-factorial. 

There is an equivalence $\Phi'_2: D_{\text{sg}}(Y_1) \cong D_{\text{sg}}(Y_3)$ 
(Theorem~\ref{Knorrer}) which is given as follows.
Let $Z'_2 = \text{Spec}(k[y,z,w]/(z^2+z^3+w^2))$.
Then there are natural morphisms $q'_2: Z'_2 \to Y_1$ and $i'_2: Z' _2 \subset Y_3$.
The equivalence is given by $\Phi'_2 = Ri'_{2*}(q'_2)^*$.

Let $D = \{x=z^2+z^3+w^2=0\}$ be a prime divisor on $Y_3$, which is a Cartier divisor.
Let $D' \to D$ be the normalization.
On $Y_1$, we have $C' = k[t]$ with $z = -t^2-1$ and $w = -t^3-t$.
On $Y_3$, we have $\mathcal{O}_{D'} = k[y,t]$.
Thus we have $\Phi'_2(C') = Ri_{2*}(k[y,t]) = k[y,t] = \mathcal{O}_{D'}$.

There are surjective homomorphisms 
$\mathcal{O}_{Y_3}^{\oplus 2} \to \mathcal{O}_D^{\oplus 2} \to \mathcal{O}_{D'}$, 
hence an exact sequence 
\[
0 \to F \to \mathcal{O}_{Y_3}^{\oplus 2} \to \mathcal{O}_{D'} \to 0
\]
defining a coherent sheaf $F$.
Then $D_{\text{sg}}(Y_3)$ is spanned by $F$. 
The completion of $Y_3$ at the singular point is isomorphic to that of $X_3$,
and the completion of $F$ corresponds to that of 
$\mathcal{O}_{X_3}(-L) \oplus \mathcal{O}_{X_3}(-L')$.
\end{Expl}

%%%%%%%%%%%%%%%%%%%%%%%%%%
%%%%%%%%%%%%%%%%%%%%%%%%%%
%%%%%%%%%%%%%%%%%%%%%%%%%%
\section{Semi-orthogonal decomposition for a Gorenstein projective variety}

The following is the main result of this section:

\begin{Thm}\label{main}
Let $X$ be a Gorenstein projective variety, let $L$ be a maximally Cohen-Macaulay sheaf on $X$, 
let $F$ be a coherent sheaf which is a perfect complex on $X$,
and let $R = \text{End}_X(F)$ be the endomorphism ring. 
Assume the following conditions:

\begin{enumerate}
\item $\text{Hom}_{D^b(\text{coh}(X))}(F,F[p]) = 0$ for $p \ne 0$.

\item $F$ is flat over $R$.

\item $L$ generates the triangulated category of singularities 
$D_{\text{sg}}(X) = D^b(\text{coh}(X))/\text{Perf}(X)$ in the sense 
that $\text{Hom}(L, A) \ne 0$ for any $A \not\cong 0$ in $D_{\text{sg}}(X)$ 
(note that there is no shift of $A$).

\item $L$ belongs to the triangulated subcategory $T$ of $D^b(\text{coh}(X))$ generated by $F$.
\end{enumerate}

Denote by $T^{\perp}$ the right orthogonal complement of $T$ in 
$D^b(\text{coh}(X))$, the full subcategory
consisting of objects $A$ such that $\text{Hom}_{D^b(\text{coh}(X))}(F,A[p]) \cong 0$ for all $p$.
Then the following hold.

\begin{enumerate}
\item There is an equivalence $T \cong D^b(\text{mod-}R)$, the bounded 
derived category of finitely generated right $R$-modules.

\item There is a semi-orthogonal decomposition
\[
D^b(\text{coh}(X)) = \langle T^{\perp},T \rangle.
\]

\item $T^{\perp} \subset \text{Perf}(X)$.

\item $T^{\perp}$ is two-sided saturated.
\end{enumerate}
\end{Thm}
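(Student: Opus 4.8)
The plan is to establish the four assertions in order, using the tilting equivalence of (1) to drive the adjunction in (2), the generation hypothesis on $L$ to force the inclusion $T^{\perp}\subset \text{Perf}(X)$ in (3), and this inclusion to make $T^{\perp}$ behave as in the smooth case in (4).

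For (1), I would realize the equivalence through the adjoint pair $\Psi = (-)\otimes_R^{\mathbf{L}} F$ and $\Phi = \mathrm{RHom}_X(F,-)$. Since $F$ is a perfect complex on the proper variety $X$, $\Phi(G)$ has bounded, finitely generated cohomology over $R$ for every $G$, so $\Phi\colon D^b(\text{coh}(X))\to D^b(\text{mod-}R)$ is well defined; the flatness hypothesis (2) makes $\otimes_R F$ exact, so $\Psi\colon D^b(\text{mod-}R)\to D^b(\text{coh}(X))$ is defined on the whole category and, because $F\in T$, lands in $T$. Condition (1) gives $\Phi\Psi(R)=\mathrm{RHom}_X(F,F)=R$ in degree $0$, so the unit $\mathrm{id}\to\Phi\Psi$ is an isomorphism on the generator $R$, hence everywhere on $D^b(\text{mod-}R)$; dually the counit $\Psi\Phi\to\mathrm{id}$ is an isomorphism on $F$, hence on all of $T$ since $F$ generates $T$. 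This yields $T\cong D^b(\text{mod-}R)$. For (2) the same adjunction is the engine: $\Phi$ is right adjoint to $\Psi$ by tensor--hom duality, and composing with the equivalence of (1) shows $\Psi\Phi$ is right adjoint to the inclusion $T\hookrightarrow D^b(\text{coh}(X))$. Thus $T$ is right admissible, and by Bondal's theorem recalled in \S2 we obtain $D^b(\text{coh}(X))=\langle T^{\perp},T\rangle$; as $F$ generates $T$, the right orthogonal coincides with the category defined in the statement.

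Assertion (3) is the crux, and where I expect the main difficulty. Let $A\in T^{\perp}$. Because $L$ lies in $T$ by hypothesis (4) and $F$ generates $T$, the vanishing defining $T^{\perp}$ propagates to $L$, giving $\text{Hom}_{D^b(\text{coh}(X))}(L,A[p])=0$ for every $p$. I would then invoke Orlov's Theorem~\ref{Orlov-Prop2}: since $L$ is maximally Cohen--Macaulay, for a sufficiently large $N$ depending on $A$ one has $\text{Hom}_{D_{\text{sg}}}(\bar L,\bar A[N])\cong \text{Hom}_{D^b(\text{coh}(X))}(L,A[N])/R'$, which therefore vanishes. The delicate point is the shift: condition (3) is stated without a shift, so I would apply it to the object $\bar A[N]$, for which the vanishing $\text{Hom}_{D_{\text{sg}}}(\bar L,\bar A[N])=0$ forces $\bar A[N]\cong 0$, hence $\bar A\cong 0$ and $A\in\text{Perf}(X)$. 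Checking that Orlov's formula applies with the correct $N$ and that the generation hypothesis is matched to the shifted object is the subtle part of the whole argument.

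Finally, for (4), I would verify that $T^{\perp}$ is Hom-finite (automatic since $X$ is projective), Karoubian (it is a semi-orthogonal summand of $D^b(\text{coh}(X))$, which is idempotent complete as the bounded derived category of a Noetherian abelian category), and has a strong generator. The last point follows from the fact that $D^b(\text{coh}(X))$ has a strong generator for $X$ of finite type over a field of characteristic $0$ (Rouquier): pushing such a generator through the projection functor onto the admissible subcategory $T^{\perp}$ produces a strong generator of $T^{\perp}$. Then the Bondal--Van den Bergh criterion (\cite{BvdB} Theorem 1.3, recalled in \S2) gives right saturation, and left saturation follows symmetrically by transporting the statement through the anti-equivalence $\mathrm{RHom}_X(-,\omega_X^{\bullet})$ afforded by the Gorenstein dualizing complex, which preserves Hom-finiteness, Karoubianness, and the strong generator. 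This shows $T^{\perp}$ is two-sided saturated, i.e., behaves as in the smooth projective case.
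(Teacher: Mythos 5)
The central gap is in your proof of (1). You verify the unit $\mathrm{id}\to\Phi\Psi$ on the object $R$ and conclude it is an isomorphism ``everywhere on $D^b(\text{mod-}R)$'' because $R$ generates, and dually for the counit on $F$. But the full subcategory of objects on which the unit is an isomorphism is only a \emph{thick} subcategory, so this argument reaches no further than $\mathrm{thick}(R)=\mathrm{Perf}(R)=K^b(\text{proj-}R)$, and $\mathrm{Perf}(R)=D^b(\text{mod-}R)$ holds only when $R$ has finite global dimension. In the very application the theorem is designed for (Theorem~\ref{ODP}), $R\cong\left(\begin{smallmatrix} k & kt\\ kt & k\end{smallmatrix}\right)$ with $t^2=0$ is self-injective of infinite global dimension: the simple modules have $2$-periodic infinite projective resolutions. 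So your argument proves $\mathrm{thick}(F)\simeq\mathrm{Perf}(R)$, which is strictly smaller than the asserted $D^b(\text{mod-}R)$ exactly in the cases of interest; correspondingly, the $T$ of the theorem must be the essential image of $D^b(\text{mod-}R)$ under $-\otimes_R F$ (it contains $L_i=S_i\otimes_R F$ for the simple modules $S_i$, which do \emph{not} lie in $\mathrm{thick}(F)$ in the application, otherwise hypothesis (4) could not be satisfied there). This is precisely why the paper works with the unbounded categories: it sets up $\Phi=-\otimes_R^{L}F\colon D(\text{Mod-}R)\to D(\text{Qcoh}(X))$ and $\Psi=R\mathrm{Hom}_X(F,-)$, where $R$ is a \emph{compact} generator whose localizing closure is everything; since $F$ is perfect (hence compact) both composites commute with coproducts, so checking the unit on $R$ does suffice there, and one then restricts to the bounded subcategories using flatness of $F$ over $R$ and perfection of $F$. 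If you want to stay bounded you must replace the generation argument by dimension shifting: from a presentation $R^m\to R^n\to M\to 0$, flatness of $F$ and $\mathrm{Ext}^{>0}_X(F,F)=0$ give $\mathrm{Ext}^{i}_X(F,M\otimes_R F)\cong\mathrm{Ext}^{i+j}_X(F,M^{(j)}\otimes_R F)$ for the syzygies $M^{(j)}$, and these vanish for $i>0$ because $X$ has finite cohomological dimension.

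Two further remarks. In (4), Hom-finiteness of $T^{\perp}$ is \emph{not} automatic from projectivity of $X$: for singular $X$ the category $D^b(\text{coh}(X))$ is never Hom-finite (already $\sum_i\dim\mathrm{Ext}^i(\mathcal{O}_P,\mathcal{O}_P)=\infty$ for a singular point $P$); what gives Hom-finiteness is exactly assertion (3), $T^{\perp}\subset\mathrm{Perf}(X)$, which you have available, so this is a wrong justification rather than a wrong claim. With that fixed, your route to (4) --- Rouquier's strong generation of $D^b(\text{coh}(X))$, projection of the strong generator into the admissible subcategory $T^{\perp}$ via the left adjoint of the inclusion, then the Bondal--Van den Bergh criterion, with left saturation obtained by duality --- is valid and genuinely different from (and shorter than) the paper's, which proves right saturation by hand via Brown representability on the cocomplete orthogonal $(T')^{\perp}$, a Kan-extension argument from \cite{CKN}, and a reduction to modules over a finite-dimensional algebra through Beilinson's resolution on an ambient $\mathbf{P}^n$; indeed the paper needs its unbounded semi-orthogonal decomposition chiefly for this step. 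Your part (3) is correct and is the paper's argument in contrapositive form, with the shift issue in Theorem~\ref{Orlov-Prop2} handled the same way.
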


\begin{proof}
(1) and (2). 
We define functors between unbounded triangulated categories 
$\Phi: D(\text{Mod-}R) \to D(\text{Qcoh}(X))$ and $\Psi: D(\text{Qcoh}(X)) \to D(\text{Mod-}R)$
in the following, where $D(\text{Mod-}R)$ denotes the unbounded derived category of 
right $R$-modules which are not necessarily finitely generated.
We set $\Phi(\bullet) = \bullet \otimes_R^L F$, 
where lower $R$ stands for the tensor product over $R$ and 
the upper $L$ for the 
left derived functor, and $\Psi(\bullet) = R\text{Hom}_X(F,\bullet)$. 
That is, we define $\Phi(A) = P_* \otimes_R F$ as complexes for a $K$-projective resolution 
$P_* \to A$ in $D(\text{Mod-}R)$, 
and $\Psi(B) = \text{Hom}_X(F,I_*)$ as complexes for a $K$-injective resolution 
$B \to I_*$ in $D(\text{Qcoh}(X))$.
Since $F$ is flat over $R$ and perfect on $X$, these functors induces
functors $\Phi_0: D^b(\text{mod-}R) \to D^b(\text{coh}(X))$ and 
$\Psi_0: D^b(\text{coh}(X)) \to D^b(\text{mod-}R)$, 
where $D^b(\text{mod-}R)$ denotes the bounded derived category of 
right $R$-modules which are finitely generated.

We have
\[
\begin{split}
&\text{Hom}_X(\Phi(A),B) \cong \text{Hom}_X(P_* \otimes_R F,I_*) \\
&\cong \text{Hom}_R(P_*, \text{Hom}_X(F,I_*)) \cong \text{Hom}_R(A, \Psi(B)).
\end{split}
\]
Therefore $\Phi$ and $\Psi$ are adjoints.

We have 
\[
\Psi\Phi(A) = R\text{Hom}_X(F,P_* \otimes_R F) \cong P_* \otimes_R R\text{Hom}(F,F) 
\cong P_* \cong A.
\]
Thus adjunction morphism $A \to \Psi\Phi(A)$ is an isomorphism, and $\Phi$ is fully faithful.
Let $T'$ be the image of $\Phi$, i.e., the triangulated subcategory of $D(\text{Qcoh}(X))$ 
generated by $F$.
Then we conclude that there is a semi-orthogonal decomposition 
$D(\text{Qcoh}(X)) = \langle (T')^{\perp},T' \rangle$.

By restriction, we deuce that $\Phi_0$ is fully faithful and $\Psi_0$ is its right adjoint.
Therefore we have (1) and (2).
We needed the unbounded version of the statement for the following proof of the assertion (4).

\vskip 1pc

(3) Let $G \in D^b(\text{coh}(X))$.
If $G \not\in \text{Perf}(X)$, then 
$\bar G[N] \not\cong 0 \in D_{\text{sg}}(X)$, where $N$ is the number in Theorem~\ref{Orlov-Prop2}
and $\bar G$ denotes the object $G$ in $D_{\text{sg}}(X)$.
Then $\text{Hom}_{D_{\text{sg}}(X)}(\bar L,\bar G[N]) \ne 0$, 
hence $\text{Hom}_{D^b(\text{coh}(X))}(L,G[N]) \ne 0$ by Theorem~\ref{Orlov-Prop2}.
Thus $G \not\in T^{\perp}$.
Therefore $T^{\perp} \subset \text{Perf}(X)$.

\vskip 1pc

(4) We modify the proof of \cite{BvdB} Theorem A.1.
We note that $T^{\perp}$ is Hom-finite because it is contained in $\text{Perf}(X)$.
It has also a Serre functor given by $\otimes \omega_X[\dim X]$.
Hence it is sufficient to prove that $T^{\perp}$ is right saturated.

$(T')^{\perp}$ has arbitrary coproduct, and it is compactly generated by 
$T^{\perp} = (T')^{\perp} \cap \text{Perf}(X)$.
Indeed, for any object $0 \not\cong B \in (T')^{\perp}$, 
there is $A \in \text{Perf}(X)$ such that $\text{Hom}_X(A,B) \ne 0$.
By (2), there is a left adjoint functor $\Xi: D(\text{Qcoh}(X)) \to (T')^{\perp}$ of the inclusion functor 
$\Theta: (T')^{\perp} \to D(\text{Qcoh}(X))$ which
induces a left adjoint functor $\Xi_0: D^b(\text{coh}(X)) \to T^{\perp}$ of the inclusion functor 
$\Theta_0: T^{\perp} \to D^b(\text{coh}(X))$.
Then $\text{Hom}_X(\Xi_0(A),B) \ne 0$ because $\text{Hom}_X(F,B) = 0$.

We use \cite{CKN} Lemma 2.14.
Let $H: (T^{\perp})^{\text{op}} \to (\text{mod-}k)$ be any cohomological functor of finite type.
We define $G = DH: T^{\perp} \to (\text{mod-}k)$ using the duality functor 
$D: (\text{mod-}k)^{\text{op}} \to (\text{mod-}k)$ given by 
$D(V) = \text{Hom}(V,k)$.
Let $G': (T')^{\perp} \to (\text{mod-}k)$ be the Kan extension of $G$ given by
$G'(B) = \text{colim}_{C \to B}G(C)$, where the colimit is taken for all morphisms 
from all compact objects $C \in T^{\perp}$.
$DG'$ is represented by an object $A \in (T')^{\perp}$ by the Brown representability theorem.
Since $DDH = H$ on $T^{\perp}$, we deduce that $H$ is represented by $A$.

We have to prove that $A \in T^{\perp} = (T')^{\perp} \cap D^b(\text{coh}(X))$.
We take an embedding $p: X \to \mathbf{P}^n$, 
and let $H'=H\Xi_0p^*: D^b(\text{coh}(\mathbf{P}^n))^{\text{op}} \to (\text{mod-}k)$ 
be the induced cohomological functor. 
By Beilinson's theorem (\cite{Beilinson}), 
there is an equivalence $t': D(\text{Mod-}S) \cong D(\text{Qcoh}(\mathbf{P}^n))$ 
which induces an equivalence 
$t: D^b(\text{mod-}S) \cong D^b(\text{coh}(\mathbf{P}^n))$
for some finite dimensional associative ring $S$.
Let $H'' = H't: D^b(\text{mod-}S)^{\text{op}} \to (\text{mod-}k)$ and 
$A' = (t')^{-1}p_*\Theta(A) \in D(\text{Mod-}S)$.
Then we have $H''(B) = \text{Hom}_X(\Xi_0p^*t(B),A) = \text{Hom}(B, A')$ for any 
$B \in D^b(\text{mod-}S)$, i.e., 
$H''$ is represented by $A'$.
Therefore our assertion is reduced to showing that $A' \in D^b(\text{mod-}S)$.
We have $\sum \dim \text{Hom}_S(S[n],A') = \sum H''(S[n]) < \infty$, 
hence $A' \in D^b(\text{mod-}S)$, and we are done.
\end{proof}

\begin{Rem}
We will use the theorem in the case where $F$ is obtained as a versal non-commutative deformation of 
a simple collection $L$ as described in \cite{NC}.
In this case $F$ is flat over the parameter algebra $R = \text{End}_X(F)$.

We do not know how to generalize the theorem in the case where $F$ is not necessarily flat over its
endomorphism ring.
Indeed we do not know how to prove that the functor
$\Phi: D^-(\text{mod-}R) \to D^-(\text{coh}(X))$ defined by 
$\Phi(\bullet) = \bullet \otimes_R^L F$ is bounded, i.e., 
$\Phi$ sends $D^b(\text{mod-}R)$ to $D^b(\text{coh}(X))$, though its right adjoint functor 
$\Psi: D^-(\text{coh}(X)) \to D^-(\text{mod-}R)$ defined by 
$\Psi(\bullet) = R\text{Hom}(F,\bullet)$ is bounded.
\end{Rem}

%%%%%%%%%%%%%%%%%%%%%%%%%%
%%%%%%%%%%%%%%%%%%%%%%%%%%
%%%%%%%%%%%%%%%%%%%%%%%%%%
%%%%%%%%%%%%%%%%%%%%%%%%%%
%%%%%%%%%%%%%%%%%%%%%%%%%%
%%%%%%%%%%%%%%%%%%%%%%%%%%
\section{Non-commutative deformation on a $3$-fold 
with a non-$\mathbf{Q}$-factorial ordinary double point}

We will apply Theorem~\ref{main} to a $3$-fold 
with a non-$\mathbf{Q}$-factorial ordinary double point.
The following theorem says that the conditions of Theorem~\ref{main} are 
satisfied under some assumptions:

\begin{Thm}\label{ODP}
Let $X$ be a projective variety of dimension $3$.
Assume that there is only one singular point $P$ 
which is a non-$\mathbf{Q}$-factorial ordinary double point.
Then there is a $\mathbf{Q}$-factorialization $f: Y \to X$, 
a projective birational morphism from a smooth projective variety 
whose exceptional locus $l$ is a smooth rational curve.
Assume that there are divisors $D_1,D_2$ on $Y$ such that, 
for $L_i = f_*\mathcal{O}_Y(-D_i)$, the following conditions
are satisfied:

\begin{enumerate}

\item $(D_1,l) = 1$ and $(D_2,l) = -1$.

\item $(L_1,L_2)$ is a simple collection, i.e., 
$\dim \text{Hom}(L_i,L_j) = \delta_{ij}$.

\item $H^p(X,f_*\mathcal{O}_Y(-D_i+D_j)) = 0$ for all $p > 0$ and all $i,j$.

\end{enumerate}

\noindent
Then there are locally free sheaves $F_1,F_2$ of rank $2$ on $X$ given by non-trivial extensions
\begin{equation}\label{F_i}
\begin{split}
0 \to L_2 \to F_1 \to L_1 \to 0 \\
0 \to L_1 \to F_2 \to L_2 \to 0 
\end{split}
\end{equation}
such that, for $L = L_1 \oplus L_2$ and $F = F_1 \oplus F_2$, the following assertions hold:

\begin{enumerate}
\item $\text{Ext}^p(F,F) = 0$ for $p > 0$.

\item $F$ is flat over $R := \text{End}(F) \cong \left( \begin{matrix} k & kt \\ kt & k \end{matrix} \right)
\mod t^2$.

\item $L$ is a Cohen-Macaulay sheaf which generates the triangulated category of singularities
$D_{\text{sg}}(X)$.

\item $L$ belong to the triangulated subcategory of $D^b(\text{coh}(X))$ generated by $F$.
\end{enumerate}
\end{Thm}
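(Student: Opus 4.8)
The plan is to reduce the whole statement to the local model $X_3 = \{xy+zw=0\}$ analysed in \S4 and then to globalise by means of the cohomological hypotheses. Since $P$ is an ordinary double point, the formal completion of $X$ at $P$ agrees with that of $X_3$, and the $\mathbf{Q}$-factorialization $f\colon Y\to X$ is a small resolution contracting $l\cong\mathbf{P}^1$. The intersection numbers $(D_1,l)=1$ and $(D_2,l)=-1$ fix the analytic identification of $L_1=f_*\mathcal{O}_Y(-D_1)$ and $L_2=f_*\mathcal{O}_Y(-D_2)$ with the two reflexive rank-one sheaves $\mathcal{O}_{X_3}(-L)$ and $\mathcal{O}_{X_3}(-L')$ of the $X_3$-example; in particular each $L_i$ is reflexive of rank one, hence maximally Cohen-Macaulay, and is a line bundle on $X\setminus\{P\}$. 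First I would record the local $\mathcal{E}xt$-sheaves: because $L_1,L_2$ are locally free away from $P$, the sheaves $\mathcal{E}xt^q(L_i,L_j)$ for $q\ge 1$ are skyscrapers at $P$, and the defining sequence $0\to\mathcal{O}_{X_3}(-L)\to F\to\mathcal{O}_{X_3}(-L')\to 0$ of \S4 shows that $\mathcal{E}xt^1(L_1,L_2)$ and $\mathcal{E}xt^1(L_2,L_1)$ are each one-dimensional.

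Next I would construct $F_1,F_2$ as the non-split extensions representing these classes. Since $L_1,L_2$ is a simple collection by hypothesis and $H^{>0}(X,\mathcal{H}om(L_i,L_j))=H^{>0}(X,f_*\mathcal{O}_Y(D_i-D_j))$ vanishes by the cohomology hypothesis, the local-to-global spectral sequence gives $\text{Ext}^1(L_i,L_j)\cong H^0(\mathcal{E}xt^1(L_i,L_j))\cong k$ for $i\ne j$, so the two extensions in $(\ref{F_i})$ exist and are unique up to scalar. Local freeness of $F_1,F_2$ is then immediate: on $X\setminus\{P\}$ an extension of line bundles is a vector bundle, while near $P$ the extension coincides analytically with the locally free rank-two extension of the $X_3$-example. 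For the endomorphism algebra I would compute the four blocks of $R=\text{End}(F)=\bigoplus_{i,j}\text{Hom}(F_i,F_j)$ by applying $\text{Hom}(F_i,-)$ and $\text{Hom}(-,L_b)$ to the sequences $(\ref{F_i})$: the simple-collection relations $\text{Hom}(L_a,L_b)=\delta_{ab}k$ together with the fact that the connecting map $\text{Hom}(L_2,L_2)\to\text{Ext}^1(L_1,L_2)$ is the nonzero extension class force $\text{Hom}(F_i,L_j)=\delta_{ij}k$, whence each diagonal block is $k$ and each off-diagonal block is one-dimensional. The composite $F_1\to F_2\to F_1$ of the two off-diagonal generators factors through $L_1\hookrightarrow F_2\twoheadrightarrow L_2$, which is zero because $L_1=\ker(F_2\to L_2)$; hence the radical squares to zero and $R\cong\bigl(\begin{smallmatrix}k&kt\\kt&k\end{smallmatrix}\bigr)\bmod t^2$, the stated four-dimensional algebra. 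This gives the algebra part of assertion (2).

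For the remaining assertions I would argue as follows. Since $F_1,F_2$ are locally free, $\mathcal{E}xt^{>0}(F_i,F_j)=0$, so $\text{Ext}^p(F_i,F_j)=H^p(X,\mathcal{H}om(F_i,F_j))$; filtering the locally free sheaf $\mathcal{H}om(F_i,F_j)$ by the two extensions $(\ref{F_i})$ reduces the required vanishing in $p>0$ to $H^{>0}(X,f_*\mathcal{O}_Y(D_a-D_b))=0$, which is the cohomology hypothesis, the only correction terms being skyscrapers at $P$ that cannot contribute in positive degree; this yields assertion (1), $\text{Ext}^p(F,F)=0$ for $p>0$. Flatness of $F$ over $R$ (the rest of assertion (2)) I would obtain by identifying $F$ with the versal non-commutative deformation of the simple collection $L$ in the sense of \cite{NC}: its parameter algebra is precisely the $R$ computed above and the versal family is flat over it by construction. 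Assertion (3) holds because each $L_i$ is maximally Cohen-Macaulay and, by the $X_3$-computation of \S4 combined with the isomorphism of formal completions, $\bar L_1,\bar L_2$ span $D_{\text{sg}}(X)$. Finally, for assertion (4) I would identify $L_i$ with the image $\Phi_0(S_i)$ of the simple right $R$-module $S_i$ under $\Phi_0=-\otimes_R^L F$: one has $\Phi_0(e_iR)=F_i$, and the exact sequence $0\to S_j\to e_iR\to S_i\to 0$ of $R$-modules is sent to the defining triangle $L_j\to F_i\to L_i$, so $\Phi_0(S_i)\cong L_i$ and $L$ lies in the subcategory $T$ generated by $F$.

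The main obstacle I expect is assembling the full Ext-vanishing of assertion (1) together with the flatness in assertion (2): both amount to showing that the non-commutative deformation of the simple collection $L$ is unobstructed and terminates already at first order, so that the parameter algebra is the four-dimensional $R$ rather than an infinite chain. The delicate points are controlling the skyscraper correction terms at $P$ in the cohomology computation so that they do not leak into positive degrees, and justifying flatness over a non-commutative Artinian base; this is exactly the mechanism that, as noted in the introduction, breaks down for a factorial ordinary double point, where the deformation fails to terminate.
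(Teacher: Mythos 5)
Your proposal follows the same architecture as the paper's proof: identify the completion of $X$ at $P$ with that of the cone over $\mathbf{P}^1\times\mathbf{P}^1$, use hypothesis (3) to degenerate the local-to-global spectral sequence so that the local extension classes globalize, build $F_1,F_2$ as the resulting locally free extensions, compute $R=\text{End}(F)$ from simplicity together with the vanishing of the composite $F_1\to F_2\to F_1$, obtain flatness by viewing $F$ as the terminating versal NC deformation of $L$, and get assertion (4) by applying $-\otimes_R F$ to $0\to S_j\to e_iR\to S_i\to 0$. These parts agree with the paper and are sound, up to one slip: ``reflexive of rank one, hence maximally Cohen-Macaulay'' is not a valid implication on a $3$-fold (reflexivity only gives depth $2$); the paper proves MCM-ness by a local cohomology argument using the sequences defining the $F_i$, though your analytic identification with the model sheaves, which are MCM, would also suffice.

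The genuine gap is in assertion (1), at exactly the point you dismiss. Filtering $\mathcal{H}om(F_i,F_j)$ by the extensions does not have the sheaves $\mathcal{H}om(L_a,L_b)\cong f_*\mathcal{O}_Y(D_a-D_b)$ as graded pieces: since $L_1$ is not locally free at $P$, applying $\mathcal{H}om(-,L_b)$ to $0\to L_2\to F_1\to L_1\to 0$ yields the four-term sheaf sequence
\[
0\to\mathcal{H}om(L_1,L_b)\to\mathcal{H}om(F_1,L_b)\to\mathcal{H}om(L_2,L_b)\to\mathcal{E}xt^1(L_1,L_b)\to 0,
\]
and the skyscraper enters $H^1$ through the connecting map. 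Chasing this with hypothesis (3) and simplicity shows that for $b=2$ the skyscraper is indeed killed, because $\text{Hom}(L_2,L_2)\to H^0(\mathcal{E}xt^1(L_1,L_2))$ hits the generator $[F_1]$, as you note; but for $b=1$ the only term mapping onto the skyscraper is $\text{Hom}(L_2,L_1)=0$, and one obtains an isomorphism $\text{Ext}^1(F_1,F_1)\cong H^0(\mathcal{E}xt^1(L_1,L_1))$ (the degrees $p\ge 2$ do vanish formally). Thus, given everything you have established, assertion (1) is \emph{equivalent} to the local vanishing $\mathcal{E}xt^1(L_i,L_i)=0$ at the ODP, which you never prove and which is not formal: it is precisely the content of the long Lemma occupying most of the paper's proof (the computations on the projective cone yielding $\text{Ext}^1(\mathcal{O}_{X'}(0,-1),\mathcal{O}_{X'}(0,-1))=0$ and the two-periodic pattern), and it is exactly where non-$\mathbf{Q}$-factoriality is used --- in the factorial examples of \S 7 the analogous self-extension spaces never die and NC deformations fail to terminate, so no formal argument can dispose of these skyscrapers. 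The same missing computation underlies your claim that $\mathcal{E}xt^1(L_1,L_2)$ is one-dimensional; the non-split sequence from \S 4 only shows it is nonzero. Once that local input is supplied, your sheaf-level filtration actually gives a slightly more economical route to assertion (1) than the paper's, which instead proves bijectivity of the connecting maps $\text{Ext}^p(L_2,L_1)\to\text{Ext}^{p+1}(L_1,L_1)$ for all $p>0$; without it, the proposal does not prove the theorem, as your own closing paragraph in effect concedes.
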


\begin{proof}
We consider $2$-pointed non-commutative (NC) deformations of a simple collection $(L_1,L_2)$ 
(\cite{NC}).
There is a spectral sequence
\[
H^p(X,\mathcal{E}xt^q(L_i,L_j)) \Rightarrow \text{Ext}^{p+q}(L_i,L_j)
\]
for any $i,j$.
By the condition (3), we deduce that 
\[
\text{Ext}^p(L_i,L_j) \cong H^0(X, \mathcal{E}xt^p(L_i,L_j))
\]
for all $p$.

A neighborhood of the singular point $P \in X$ is analytically isomorphic to 
that of the vertex of the cone over $\mathbf{P}^1 \times \mathbf{P}^1$ 
in $\mathbf{P}^4$ considered in \cite{NC}~Example 5.6 
(see also the last section).
Since the sheaves $L_1$ and $L_2$ here correspond to the sheaves $\mathcal{O}_X(0,-1)$ 
and $\mathcal{O}_X(-1,0)$ there,  
the extension space $\mathcal{E}xt^1(L_1,L_2)$ is isomorphic to 
$\mathcal{E}xt^1(\mathcal{O}_X(0,-1),\mathcal{O}_X(-1,0))$ there.
By the argument there, there exists a locally free sheaf $F_1$ in an analytic neighborhood of 
$P$ with the extension sequence as stated in the theorem, and similarly $F_2$.
Since $\text{Ext}^1(L_i,L_j) \cong H^0(X, \mathcal{E}xt^1(L_i,L_j))$, 
we deduce that these extensions exist globally on $X$. 

We need to calculate $\mathcal{E}xt^p(L_i,L_j)$ for all $p$ at $P$.
For this purpose we need the following calculation:

\begin{Lem}
Let $X'$ be the cone over $\mathbf{P}^1 \times \mathbf{P}^1$ in $\mathbf{P}^4$ 
as in \cite{NC}~Example 5.6
(we use the notation $X'$ in order to avoid a confusion).
Let $G_1,G_2$ be locally free sheaves of rank $2$ on $X'$ defined by non-trivial 
extensions:
\begin{equation}
\begin{split}
0 \to \mathcal{O}_{X'}(-1,0) \to G_1 \to \mathcal{O}_{X'}(0,-1) \to 0 \\
0 \to \mathcal{O}_{X'}(0,-1) \to G_2 \to \mathcal{O}_{X'}(-1,0) \to 0. 
\end{split}
\end{equation}
Then the following hold: 
\[
\begin{split}
&(1) \,\, H^p(X',\mathcal{O}_{X'}(-1,0)) = H^p(X',\mathcal{O}_{X'}(-2,0)) = H^p(X',\mathcal{O}_{X'}(-1,1)) = 0,
\,\, \forall p. \\
&(2) \,\, H^p(\mathcal{O}_{X'}(0,-1)) = H^p(X',\mathcal{O}_{X'}(0,-2)) = H^p(X',\mathcal{O}_{X'}(1,-1)) = 0, 
\,\, \forall p. \\
&(3) \,\, \text{Ext}^p(G_1,\mathcal{O}_{X'}(-1,0)) = \text{Ext}^p(G_1,\mathcal{O}_{X'}(0,-1)) = 0, 
\,\, \forall p > 0. \\
&(4) \,\, \text{Ext}^p(G_2,\mathcal{O}_{X'}(-1,0)) = \text{Ext}^p(G_2,\mathcal{O}_{X'}(0,-1)) = 0, 
\,\, \forall p > 0. \\
&(5) \,\, \dim \text{Hom}(G_1,\mathcal{O}_{X'}(0,-1)) = \dim \text{Hom}(G_2,\mathcal{O}_{X'}(-1,0)) = 1. \\
&(6) \,\, \text{Hom}(G_1,\mathcal{O}_{X'}(-1,0)) = \text{Hom}(G_2,\mathcal{O}_{X'}(0,-1)) = 0. \\
&(7) \,\, \text{Ext}^p(\mathcal{O}_{X'}(0,-1),\mathcal{O}_{X'}(0,-1)) = \text{Ext}^p(\mathcal{O}_{X'}(-1,0),
\mathcal{O}_{X'}(-1,0)) = 0, \,\, 
p > 0, p \equiv 1 (\text{mod }2). \\
&(8) \,\, \text{Ext}^p(\mathcal{O}_{X'}(0,-1),\mathcal{O}_{X'}(-1,0)) = \text{Ext}^p(\mathcal{O}_{X'}(-1,0),
\mathcal{O}_{X'}(0,-1)) = 0, \,\, 
p > 0, p \equiv 0 (\text{mod }2). \\
&(9) \,\, \dim \text{Ext}^p(\mathcal{O}_{X'}(0,-1),\mathcal{O}_{X'}(0,-1)) 
= \dim \text{Ext}^p(\mathcal{O}_{X'}(-1,0),\mathcal{O}_{X'}(-1,0)) = 1, \,\, p > 0, 
p \equiv 0 (\text{mod }2). \\
&(10) \,\, \dim \text{Ext}^p(\mathcal{O}_{X'}(0,-1),\mathcal{O}_{X'}(-1,0)) 
= \dim \text{Ext}^p(\mathcal{O}_{X'}(-1,0),\mathcal{O}_{X'}(0,-1)) = 1, \,\, p > 0, 
p \equiv 1 (\text{mod }2).
\end{split}
\]
\end{Lem}

\begin{proof}
(1) and (2).
Let $D \cong \mathbf{P}^2$ be a prime divisor on $X'$ corresponding to 
$\mathcal{O}_{X'}(1,0)$ such that there is an exact sequence
\[
0 \to \mathcal{O}_{X'}(-1,0) \to \mathcal{O}_{X'} \to \mathcal{O}_D \to 0.
\]
Since $H^p(X',\mathcal{O}_{X'}) \cong H^p(D,\mathcal{O}_D)$ for all $p$, 
we have $H^p(X',\mathcal{O}_{X'}(-1,0)) = 0$ for all $p$.
We have an exact sequence
\[
0 \to \mathcal{O}_{X'}(-2,0) \to \mathcal{O}_{X'}(-1,0) \to \mathcal{O}_D(-P) \to 0
\]
where $\mathcal{O}_D(-P)$ is the ideal sheaf of $P$ on $D$:
\[
0 \to \mathcal{O}_D(-P) \to \mathcal{O}_D \to \mathcal{O}_P \to 0.
\]
Since $H^p(\mathcal{O}_D) \cong H^p(\mathcal{O}_P)$ for all $p$, 
we have $H^p(\mathcal{O}_D(-P)) = 0$ for all $p$.
Then we deduce that $H^p(X',\mathcal{O}_{X'}(-2,0)) = 0$ for all $p$.

Let $S \cong \mathbf{P}^1 \times \mathbf{P}^1$ be a prime divisor on $X'$ 
corresponding to $\mathcal{O}_{X'}(1,1)$ 
such that there is an exact sequence
\[
0 \to \mathcal{O}_{X'}(-1,-1) \to \mathcal{O}_{X'} \to \mathcal{O}_S \to 0.
\]
Then we have
\[
0 \to \mathcal{O}_{X'}(-2,0) \to \mathcal{O}_{X'}(-1,1) \to \mathcal{O}_S(-1,1) \to 0.
\]
Since $H^p(S,\mathcal{O}_S(-1,1)) = 0$ for all $p$, we have $H^p(X',\mathcal{O}_{X'}(-1,1)) = 0$ for all $p$.

The second assertion follows by symmetry.

\vskip 1pc

(3) and (4). 
There are spectral sequences
\[
\begin{split}
&E_2^{p,q} = H^p(X,\mathcal{E}xt^q(\mathcal{O}_{X'}(0,-1),\mathcal{O}_{X'}(-1,0))) 
\Rightarrow \text{Ext}^{p+q}(\mathcal{O}_{X'}(0,-1), \mathcal{O}_{X'}(-1,0)) \\
&E_2^{p,q} = H^p(X,\mathcal{E}xt^q(\mathcal{O}_{X'}(1,0),\mathcal{O}_{X'}(-1,0))) 
\Rightarrow \text{Ext}^{p+q}(\mathcal{O}_{X'}(1,0), \mathcal{O}_{X'}(-1,0)).
\end{split}
\]
Then by (1), we obtain natural isomorphisms
\[
\begin{split}
&\text{Ext}^p(\mathcal{O}_{X'}(0,-1), \mathcal{O}_{X'}(-1,0)) 
\cong H^0(\mathcal{E}xt^p(\mathcal{O}_{X'}(0,-1), \mathcal{O}_{X'}(-1,0)) \\
&\cong \text{Ext}^p(\mathcal{O}_{X'}(1,0), \mathcal{O}_{X'}(-1,0))
\end{split}
\]
for all $p > 0$.

We have a commutative diagram of exact sequences
\[
\begin{CD}
0 @>>> \mathcal{O}_{X'}(-1,0) @>>> G_1 @>>> \mathcal{O}_{X'}(0,-1) @>>> 0 \\
@. @V=VV @VVV @VVV @. \\
0 @>>> \mathcal{O}_{X'}(-1,0) @>>> \mathcal{O}_{X'}^2 @>>> \mathcal{O}_{X'}(1,0) @>>> 0
\end{CD}
\]
where the cokernels of the middle and right vertical arrows are isomorphic to $\mathcal{O}_S(1,0)$.
By (1) we have isomorphisms
\[
\text{Ext}^p(G_1,\mathcal{O}_{X'}(-1,0)) \cong 
\text{Ext}^{p+1}(\mathcal{O}_S(1,0), \mathcal{O}_{X'}(-1,0)) \cong 0
\]
for all $p > 0$.

Since $H^p(X',\mathcal{O}_{X'}) = 0$ for $p > 0$ and $H^p(X',\mathcal{O}_{X'}(-1,-1)) = 0$ 
for all $p$, we also have
\[
\text{Ext}^p(G_1,\mathcal{O}_{X'}(0,-1)) \cong \text{Ext}^{p+1}(\mathcal{O}_S(1,0), 
\mathcal{O}_{X'}(0,-1)) \cong 0
\]
for all $p > 0$.
 
The second assertion follows by symmetry.

\vskip 1pc

(5) etc. 
We have a long exact sequence
\[
\begin{split}
&0 \to \text{Hom}(\mathcal{O}_{X'}(0,-1),\mathcal{O}_{X'}(0,-1)) \to \text{Hom}(G_1,\mathcal{O}_{X'}(0,-1)) 
\to \text{Hom}(\mathcal{O}_{X'}(-1,0),\mathcal{O}_{X'}(0,-1)) \\
&\to \text{Ext}^1(\mathcal{O}_{X'}(0,-1),\mathcal{O}_{X'}(0,-1)) \to \text{Ext}^1(G_1,\mathcal{O}_{X'}(0,-1)) 
\to \text{Ext}^1(\mathcal{O}_{X'}(-1,0),\mathcal{O}_{X'}(0,-1)) \to \dots
\end{split}
\]
Since $\text{Hom}(\mathcal{O}_{X'}(-1,0),\mathcal{O}_{X'}(0,-1)) = 0$ and 
$\text{Ext}^p(G_1,\mathcal{O}_{X'}(0,-1)) = 0$ for $p > 0$,
we deduce 
\begin{equation}\label{coh1}
\begin{split}
&\dim \text{Hom}(G_1,\mathcal{O}_{X'}(0,-1)) = 1, 
\,\, \text{Ext}^1(\mathcal{O}_{X'}(0,-1),\mathcal{O}_{X'}(0,-1)) = 0 \\
&\text{Ext}^p(\mathcal{O}_{X'}(-1,0),\mathcal{O}_{X'}(0,-1)) 
\cong \text{Ext}^{p+1}(\mathcal{O}_{X'}(0,-1),\mathcal{O}_{X'}(0,-1)) \,\, (p > 0).
\end{split}
\end{equation}
We have a long exact sequence
\[
\begin{split}
&0 \to \text{Hom}(\mathcal{O}_{X'}(0,-1),\mathcal{O}_{X'}(-1,0)) 
\to \text{Hom}(G_1,\mathcal{O}_{X'}(-1,0)) \to 
\text{Hom}(\mathcal{O}_{X'}(-1,0),\mathcal{O}_{X'}(-1,0)) \\
&\to \text{Ext}^1(\mathcal{O}_{X'}(0,-1),\mathcal{O}_{X'}(-1,0)) 
\to \text{Ext}^1(G_1,\mathcal{O}_{X'}(-1,0)) \to 
\text{Ext}^1(\mathcal{O}_{X'}(-1,0),\mathcal{O}_{X'}(-1,0))) \to \dots
\end{split}
\]
By construction, the homomorphism 
\[
k \cong \text{Hom}(\mathcal{O}_{X'}(-1,0),\mathcal{O}_{X'}(-1,0)) 
\to \text{Ext}^1(\mathcal{O}_{X'}(0,-1),\mathcal{O}_{X'}(-1,0))
\] 
is injective.
Hence we deduce 
\begin{equation}\label{coh2}
\begin{split}
&\text{Hom}(G_1,\mathcal{O}_{X'}(-1,0)) = 0, 
\,\, \dim \text{Ext}^1(\mathcal{O}_{X'}(0,-1),\mathcal{O}_{X'}(-1,0)) = 1 \\
&\text{Ext}^p(\mathcal{O}_{X'}(-1,0),\mathcal{O}_{X'}(-1,0)) 
\cong \text{Ext}^{p+1}(\mathcal{O}_{X'}(0,-1),\mathcal{O}_{X'}(-1,0))  \,\, (p > 0).
\end{split}
\end{equation}
By symmetry we also obtain
\begin{equation}\label{coh3}
\begin{split}
&\text{Hom}(G_2,\mathcal{O}_{X'}(0,-1)) = 0, \,\, \dim \text{Hom}(G_2,\mathcal{O}_{X'}(-1,0)) = 1 \\ 
&\text{Ext}^1(\mathcal{O}_{X'}(-1,0),\mathcal{O}_{X'}(-1,0)) = 0 
\,\, dim \text{Ext}^1(\mathcal{O}_{X'}(-1,0),\mathcal{O}_{X'}(0,-1)) = 1 \\
&\text{Ext}^p(\mathcal{O}_{X'}(0,-1),\mathcal{O}_{X'}(-1,0)) 
\cong \text{Ext}^{p+1}(\mathcal{O}_{X'}(-1,0),\mathcal{O}_{X'}(-1,0)) \,\, (p > 0) \\
&\text{Ext}^p(\mathcal{O}_{X'}(0,-1),\mathcal{O}_{X'}(0,-1)) 
\cong \text{Ext}^{p+1}(\mathcal{O}_{X'}(-1,0),\mathcal{O}_{X'}(0,-1))  \,\, (p > 0).
\end{split}
\end{equation}
Combining equations (\ref{coh1}), (\ref{coh1}) and (\ref{coh1}), we obtain
\[
\begin{split}
&0 = \text{Ext}^1(\mathcal{O}_{X'}(0,-1),\mathcal{O}_{X'}(0,-1)) 
\cong \text{Ext}^2(\mathcal{O}_{X'}(-1,0),\mathcal{O}_{X'}(0,-1)) \\ 
&\cong \text{Ext}^3(\mathcal{O}_{X'}(0,-1),\mathcal{O}_{X'}(0,-1)) 
\cong \text{Ext}^4(\mathcal{O}_{X'}(-1,0),\mathcal{O}_{X'}(0,-1)) \cong \dots \\
&1 = \dim \text{Ext}^1(\mathcal{O}_{X'}(-1,0),\mathcal{O}_{X'}(0,-1)) 
= \dim \text{Ext}^2(\mathcal{O}_{X'}(0,-1),\mathcal{O}_{X'}(0,-1)) \\
&= \dim \text{Ext}^3(\mathcal{O}_{X'}(-1,0),\mathcal{O}_{X'}(0,-1)) 
= \text{Ext}^4(\mathcal{O}_{X'}(0,-1),\mathcal{O}_{X'}(0,-1)) = \dots 
\end{split}
\]
hence our remaining results. 
\end{proof}

We go back to our original situation:

\begin{Cor}
\[
\begin{split}
&(1) \,\, \text{Ext}^p(L_1,L_1) = \text{Ext}^p(L_2,L_2) = 0, \,\, p > 0, p \equiv 1 (\text{mod }2). \\
&(2) \,\, \text{Ext}^p(L_1,L_2) = \text{Ext}^p(L_2,L_1) = 0, \,\, p > 0, p \equiv 0 (\text{mod }2). \\
&(3) \,\, \dim \text{Ext}^p(L_1,L_1) = \dim \text{Ext}^p(L_2,L_2) = 1, \,\, p > 0, p \equiv 0 (\text{mod }2). \\
&(4) \,\, \dim \text{Ext}^p(L_1,L_2) = \dim \text{Ext}^p(L_2,L_1) = 1, \,\, p > 0, p \equiv 1 (\text{mod }2). \\
&(5) \,\, \dim \text{Hom}(F_i,L_i) = 1 \,\, (\forall i), \,\, \text{Hom}(F_i,L_j) = 0 \,\, (i \ne j). \\
&(6) \,\, \text{Ext}^p(F_i,L_j) = 0 \,\, p > 0, \forall i, \forall j. \\
&(7) \,\, \text{Ext}^p(F_i,F_j) = 0 \,\, p > 0, \forall i, \forall j.
\end{split}
\]
\end{Cor}

\begin{proof}
Since $P \in X$ is analytically isomorphic to the situation of the lemma, we have 
isomorphisms between inner extension sheaves at the singular points.
By the spectral sequence arguments, we obtain global assertions (1) through (4).

We have exact sequences
\[
\begin{split}
&0 \to \text{Hom}(L_1,L_1) \to \text{Hom}(F_1,L_1) \to \text{Hom}(L_2,L_1) \\
&\to \text{Ext}^1(L_1,L_1) \to \text{Ext}^1(F_1,L_1) \to \text{Ext}^1(L_2,L_1) \to \dots \\
&0 \to \text{Hom}(L_1,L_2) \to \text{Hom}(F_1,L_2) \to \text{Hom}(L_2,L_2) \\
&\to \text{Ext}^1(L_1,L_2) \to \text{Ext}^1(F_1,L_2) \to \text{Ext}^1(L_2,L_2) \to \dots
\end{split}
\]
Since $\text{Hom}(L_2,L_1) = 0$, we have $\dim \text{Hom}(F_1,L_1) = 1$.
The homomorphism $\text{Hom}(L_2,L_2) \to \text{Ext}^1(L_1,L_2)$ 
is non-trivial because the extension is non-trivial.
Hence $\text{Hom}(F_1,L_2) = 0$.

We have $\text{Ext}^1(L_1,L_1) = 0$.
We have a commutative diagram
\[
\begin{CD}
\text{Ext}^p(\mathcal{O}_{X'}(-1,0),\mathcal{O}_{X'}(0,-1)) @>{\cong}>> \mathcal{E}xt^p(L_2,L_1) 
@>{\cong}>> \text{Ext}^p(L_2,L_1) \\
@VVV @VVV @VVV \\
\text{Ext}^{p+1}(\mathcal{O}_{X'}(0,-1),\mathcal{O}_{X'}(0,-1)) 
@>{\cong}>> \mathcal{E}xt^{p+1}(L_1,L_1) @>{\cong}>> \text{Ext}^{p+1}(L_1,L_1)
\end{CD}
\]
for all $p > 0$.
Therefore the homomorphisms $\text{Ext}^p(L_2,L_1) \to \text{Ext}^{p+1}(L_1,L_1)$ are bijective.
Thus we obtain $\text{Ext}^p(F_1,L_1) = 0$ for all $p > 0$.

In a similar way, we deduce that the homomorphisms 
$\text{Ext}^p(L_2,L_2) \to \text{Ext}^{p+1}(L_1,L_2)$ are bijective,
and we obtain $\text{Ext}^p(F_1,L_2) = 0$ for all $p > 0$.
The assertions for $F_2$ are obtained by symmetry.

(7) follows from exact sequences
\[
\text{Ext}^p(F_i,L_{j'}) \to \text{Ext}^p(F_i,F_j) \to \text{Ext}^p(F_i,L_j) 
\]
for all $i,j$ and $j' \ne j$.
\end{proof}

\begin{Rem}
The $2$-periodicity is a consequence of the equalities
$\bar L_1 = \bar L_2[1] = \bar L_1[2] \in D_{\text{sg}}(X)$.
\end{Rem}

We go back to the proof of the theorem.
(7) of the above corollary implies our assertion (1). 

The assertion (2) is a consequence that $F$ is an NC deformation of $L$.
Then we obtain a functor $\Phi: D^b(\text{mod-}R) \to D^b(\text{coh}(X))$ defined by 
$\Phi(\bullet) = \bullet \otimes_R F$.
We note that the functor $\Phi$ is bounded because $F$ is flat over $R$.
The $L_i$ are images of the simple $R$-modules by $\Phi$.
Since $D^b(\text{mod-}R)$ is generated by $R$, so is the image of $\Phi$ by $F$,
hence the assertion (4).

For the assertion (3), we consider 
the following exact sequences of local cohomologies:
\[
H^{p-1}_P(L_i) \to H^p_P(L_j) \to H^p_P(F_i)
\]
for $i \ne j$.
Since the $L_i$ are reflexive, we have $H^p_P(L_j) = 0$ for $p < 2$.
Since $X$ is Cohen-Macaulay, we have $H^p_P(F_i) = 0$ for $p < 3$.
Therefore we deduce that $H^p_P(L_i) = 0$ for $p < 3$, i.e., 
the $L_i$ are maximally Cohen-Macaulay sheaves. 
They generate $D_{\text{sg}}(X)$ by the condition (1).
Thus we complete the proof of the theorem.
\end{proof}

\begin{Rem}
There is an exact sequence
\[
0 \to \mathcal{O}_{\mathbf{P}^1 \times \mathbf{P}^1}(-m,0) 
\to \mathcal{O}_{\mathbf{P}^1 \times \mathbf{P}^1}^2 \to 
\mathcal{O}_{\mathbf{P}^1 \times \mathbf{P}^1}(m,0) \to 0
\]
on $\mathbf{P}^1 \times \mathbf{P}^1$ for any positive integer $m$.
But the corresponding sequence
\[
0 \to \mathcal{O}_{X_3}(-m,0) \to \mathcal{O}_{X_3}^2 \to \mathcal{O}_{X_3}(m,0) \to 0
\]
on the cone $X_3$ over $\mathbf{P}^1 \times \mathbf{P}^1$ in $\mathbf{P}^4$ is not exact if $m \ge 2$.
This follows from the fact that the fiber 
$\mathcal{O}_{X_3}(m,0) \otimes \mathcal{O}_P \cong \mathcal{O}_{X_3}(0,-m) \otimes \mathcal{O}_P$ 
at the singular point $P$ has $m+1$ generators.
Indeed if $xy+zw=0$ is the equation of $X_3$ at $P$, then
$\mathcal{O}_{X_3}(0,-m) = (x^m,x^{m-1}z,\dots,z^m)$.
Therefore the condition (1) of the theorem is necessary.
\end{Rem}

\begin{Rem}
Our construction will be generalized to higher dimensions $X_n$ with $n \ge 4$
by using spinor bundles.
Let $n' = n-1$.
On $n'$-dimensional smooth quadric $\mathbf{Q}$, 
there are locally free sheaves $\Sigma_Q$ (resp. $\Sigma^+_Q$ and $\Sigma^-_Q$) 
of rank $2^{m-1}$ for $n' = 2m-1$ (resp. rank $2^{m-1}$ for $n' = 2m$) called {\em spinor bundles}.
There are semi-orthogonal decompositions (\cite{Kapranov}):
\[
\begin{split}
&D^b(\text{coh}(Q)) = \langle \Sigma_Q(-n'),\mathcal{O}_Q(-n' + 1), 
\dots, \mathcal{O}_Q(-1),\mathcal{O}_Q \rangle, \,\,\, n'  \text{ odd} \\
&D^b(\text{coh}(Q)) = \langle \Sigma^+_Q(-n'),\Sigma^-_Q(-n'),\mathcal{O}_Q(-n' + 1), 
\dots, \mathcal{O}_Q(-1),\mathcal{O}_Q \rangle, 
\,\,\,n'  \text{ even}.
\end{split}
\]
By \cite{Ottaviani}, there are exact sequences
\[
\begin{split}
&0 \to \Sigma_Q(-1) \to \mathcal{O}_Q^{2^m} \to \Sigma_Q \to 0, \,\,\, n' = 2m-1 \\
&0 \to \Sigma^+_Q(-1) \to \mathcal{O}_Q^{2^m} \to \Sigma^-_Q \to 0, \,\,\, n' = 2m \\
&0 \to \Sigma^-_Q(-1) \to \mathcal{O}_Q^{2^m} \to \Sigma^+_Q \to 0, \,\,\, n' = 2m.
\end{split}
\]
Correspondingly, there are Cohen-Macaulay sheaves $\Sigma_X$ (resp. $\Sigma^+_X$ and 
$\Sigma^-_X$) 
of rank $2^{m-1}$ for $n = 2m$ (resp. rank $2^{m-1}$ for $n = 2m+1$) on $X = X_n$, and there are 
exact sequences
\[
\begin{split}
&0 \to \Sigma_X(-1) \to \mathcal{O}_X^{2^m} \to \Sigma_X \to 0, \,\,\, n = 2m \\
&0 \to \Sigma^+_X(-1) \to \mathcal{O}_X^{2^m} \to \Sigma^-_X \to 0, \,\,\, n = 2m+1 \\
&0 \to \Sigma^-_X(-1) \to \mathcal{O}_X^{2^m} \to \Sigma^+_X \to 0, \,\,\, n = 2m+1.
\end{split}
\]
As in the same way as in the case of $n = 2,3$ (\cite{NC} Examples 5.5 and 5.6), 
if we pull back the sequences by
injective homomorphisms $\Sigma_X(-1) \to \Sigma_X$ and 
$\Sigma_X^{\pm}(-1) \to \Sigma_X^{\pm}$, we obtain non-commutative deformations of 
$\Sigma_X(-1)$ and $\Sigma_X^{\pm}(-1)$ yielding locally free sheaves of rank $2^m$ which 
generate semi-orthogonal components of $D^b(\text{coh}(X))$.
\end{Rem}

%%%%%%%%%%%%%%%%%%%%%%%%%%
%%%%%%%%%%%%%%%%%%%%%%%%%%
%%%%%%%%%%%%%%%%%%%%%%%%%%
\section{Examples}

\subsection{Quadric cone}

This is an example of a projective variety with a non-$\mathbf{Q}$-factorial ordinary double point 
considered in \cite{NC} Example 5.6.

Let $X$ be a cone over $\mathbf{P}^1 \times \mathbf{P}^1$ in $\mathbf{P}^4$ 
defined by an equation $xy+zw = 0$.
$X$ has one ordinary double point $P$, which is not $\mathbf{Q}$-factorial.
Let $\mathcal{O}_X(a,b)$ be reflexive sheaf of rank $1$ for integers $a,b$ 
corresponding to the invertible sheaf
$\mathcal{O}_{\mathbf{P}^1 \times \mathbf{P}^1}(a,b)$ of bidegree $(a,b)$.
$\mathcal{O}_X(a,b)$ is invertible if and only if $a = b$.
For example, a hyperplane section bundle is $\mathcal{O}_X(1,1)$.

Let $f: Y \to X$ be a small resolution whose exceptional locus $C$ is isomorphic to $\mathbf{P}^1$.
Let $D_1$ (resp. $D_2$) be a general member of the linear systems $\vert \mathcal{O}_X(0,1) \vert$ 
(resp. $\vert \mathcal{O}_X(1,0) \vert$), and $D'_i = f_*^{-1}D_i$ be the strict transforms.
Then we have $(D'_1,C) = 1$ and $(D'_2,C) = -1$ if $f$ was chosen suitably.
There are non-trivial extensions
\[
\begin{split}
&0 \to \mathcal{O}_X(-1,0) \to G_1 \to \mathcal{O}_X(0,-1) \to 0 \\
&0 \to \mathcal{O}_X(0,-1) \to G_2 \to \mathcal{O}_X(-1,0) \to 0 
\end{split}
\]
for some locally free sheaves $G_i$ of rank $2$.
Let $G = G_1 \oplus G_2$.
Then $G$ is a relative exceptional object over a non-commutative ring $R$ of dimension $4$ over $k$:
\[
R = \text{End}(G) = \left( \begin{matrix} k & kt \\ kt & k \end{matrix} \right) \mod t^2.
\]
The multiplication of $R$ is defined according to the matrix rule.
There is a semi-orthogonal decomposition
\[
D^b(X) = \langle \mathcal{O}(-2,-2), \mathcal{O}(-1,-1), G, \mathcal{O} \rangle 
\cong \langle D^b(k), D^b(k), D^b(R), D^b(k) \rangle.
\]

%%%%%%%%%%%%%%%%%%%%%%%%%%%%%%%
%%%%%%%%%%%%%%%%%%%%%%%%%%%%%%%
%%%%%%%%%%%%%%%%%%%%%%%%%%%%%%%
\subsection{2 point blow up of $\mathbf{P}^3$}

This is another example of a projective variety with a non-$\mathbf{Q}$-factorial ordinary double point. 

Let $g: Y \to \mathbf{P}^3$ be a blowing up at 2 general points $P_1,P_2$, 
with exceptional divisors $E_1,E_2$.
Let $l$ be the strict transform of the line connecting $P_1,P_2$.
Let $f: Y \to X$ be the contraction of $l$ to a point.

Let $H$ be the strict transform of a general plane on $\mathbf{P}^3$ to $Y$.
We have $K_Y = -4H + 2E_1 + 2E_2$, and
$f$ is given by the anti-canonical linear system $\vert -K_Y \vert$ which is nef and big.
By the contraction theorem (\cite{KMM}), if $(D,l) = 0$ for a Cartier divisor $D$ on $Y$, 
then $\mathcal{O}_Y(D) = f^*\mathcal{O}_X(f_*D)$ for 
a Cartier divisor $f_*D$ on $X$.
By \cite{Beilinson} and \cite{BO}, 
$D^b(Y)$ is classically generated by a full exceptional collection 
\[
D^b(Y) = \langle \mathcal{O}_{E_1}(2E_1), \mathcal{O}_{E_2}(2E_1), 
\mathcal{O}_{E_1}(E_1), \mathcal{O}_{E_2}(E_2),
\mathcal{O}(-3H), \mathcal{O}(-2H), \mathcal{O}(-H), O \rangle.
\] 

The following lemma shows that the conditions of Theorem~\ref{ODP} are satisfied:

\begin{Lem}
Let $D_1 = -H + E_1+E_2$, $D_2 = -E_1$, and $L_i = f_*\mathcal{O}_Y(-D_i)$ for $i=1,2$.
Then the following hold.

(1) $(D_1,l) = 1$, $(D_2,l) = -1$ and $R^pf_*\mathcal{O}_Y(D_i) = 0$ for $p > 0$ and all $i$.

(2) $(L_1,L_2)$ is a simple collection.

(3) $H^p(X, f_*(D_i-D_j)) = 0$ for all $p > 0$ and all $i,j$.
\end{Lem}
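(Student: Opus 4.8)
The plan is to verify the three numbered assertions by computing on the smooth blow-up $Y$ and then descending to $X$ via the contraction $f$. Recall $K_Y = -4H + 2E_1 + 2E_2$, and $f$ contracts the strict transform $l$ of the line $\overline{P_1P_2}$. The intersection numbers in part (1) are a direct calculation: on $Y$ one has the standard relations $(H,l) = 1$ (since $l$ maps to a line), $(E_i,l) = 1$ (since $l$ meets each exceptional divisor once, being the strict transform of a line through each $P_i$), so $(D_1,l) = (-H+E_1+E_2, l) = -1+1+1 = 1$ and $(D_2,l) = (-E_1,l) = -1$, as claimed. For the vanishing $R^pf_*\mathcal{O}_Y(D_i) = 0$, I would note that $f$ is a small contraction of a single smooth rational curve $l \cong \mathbf{P}^1$, so the only possibly nonzero higher direct image is $R^1f_*$, supported at $P$; by the theorem on formal functions this vanishes once $H^1(l, \mathcal{O}_Y(D_i)|_l) = 0$, which holds because $\deg(\mathcal{O}_Y(D_i)|_l) = (D_i,l) \ge -1$ on $l \cong \mathbf{P}^1$.

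First I would establish part (1) as above, since it feeds into both remaining parts. For part (2), the claim $\dim\mathrm{Hom}(L_i,L_j) = \delta_{ij}$, I would use that $L_i = f_*\mathcal{O}_Y(-D_i)$ are reflexive rank-one sheaves on $X$, so $\mathrm{Hom}(L_i,L_j) = H^0(X, \mathcal{H}om(L_i,L_j)) = H^0(X, f_*\mathcal{O}_Y(D_i - D_j))$ (using reflexivity and that $f$ is an isomorphism away from $P$). The diagonal terms $\mathrm{End}(L_i) = H^0(\mathcal{O}_X) = k$ are immediate. For the off-diagonal vanishing, I would compute $H^0(X, f_*\mathcal{O}_Y(D_1 - D_2))$ and $H^0(X, f_*\mathcal{O}_Y(D_2 - D_1))$ directly in terms of $H$ and $E_i$: with $D_1 - D_2 = -H + 2E_1 + E_2$ and $D_2 - D_1 = H - 2E_1 - E_2$, these correspond to sections of line bundles on $\mathbf{P}^3$ twisted down along the exceptional divisors, and I expect both global section spaces to vanish because the $-H$ twist (resp. the high multiplicity along $E_1$) kills all sections.

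For part (3) I would again rewrite $H^p(X, f_*\mathcal{O}_Y(D_i - D_j))$ via the Leray spectral sequence for $f$. Because part (1) (together with the analogous vanishing for the divisor differences, which I would check satisfy $(D_i - D_j, l) \ge -1$) gives $R^qf_* = 0$ for $q > 0$, the Leray sequence degenerates and yields $H^p(X, f_*\mathcal{O}_Y(D_i-D_j)) \cong H^p(Y, \mathcal{O}_Y(D_i-D_j))$. It then suffices to prove $H^p(Y, \mathcal{O}_Y(D_i-D_j)) = 0$ for $p > 0$ on the smooth threefold $Y$, which I would do case by case using the known cohomology of line bundles on the two-point blow-up of $\mathbf{P}^3$ (pushing forward $\mathcal{O}_Y(D_i-D_j)$ to $\mathbf{P}^3$ along $g$ and applying the projection formula, or invoking Kawamata–Viehweg-type vanishing after checking the relevant divisors are nef and big enough, e.g. comparing against $-K_Y = 4H - 2E_1 - 2E_2$). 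The main obstacle I anticipate is the careful bookkeeping in part (3): one must handle all four ordered pairs $(i,j)$, verify in each case that the higher direct images along $f$ genuinely vanish (so that the reduction to $Y$ is valid), and then push the cohomology computation down to $\mathbf{P}^3$ cleanly; the diagonal cases $i = j$ give $\mathcal{O}_Y$ whose higher cohomology vanishes trivially, but the off-diagonal cases require tracking the exact twists by $H$, $E_1$, $E_2$ and confirming the positivity hypotheses needed for vanishing.
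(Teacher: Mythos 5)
Your parts (1) and (2) are fine and essentially agree with the paper (which dismisses (1) as clear and proves (2) by exactly your computation $\mathrm{Hom}(L_1,L_2)=H^0(Y,-H+2E_1+E_2)=0$, $\mathrm{Hom}(L_2,L_1)=H^0(Y,H-2E_1-E_2)=0$). The genuine gap is in part (3), for the ordered pair $(i,j)=(2,1)$. There $D_2-D_1 = H-2E_1-E_2$, and $(D_2-D_1,l) = -1-1 = -2$, so your claimed check ``$(D_i-D_j,l)\ge -1$'' fails, $R^1f_*\mathcal{O}_Y(D_2-D_1)\ne 0$ (its completion at $P$ sees $H^1(l,\mathcal{O}_l(-2))=k$ by formal functions), and the Leray sequence does not degenerate. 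Worse, the target of your proposed reduction is false: from the exact sequence
\[
0 \to \mathcal{O}_Y(H-2E_1-E_2) \to \mathcal{O}_Y(H) \to \mathcal{O}_{2E_1}\oplus\mathcal{O}_{E_2} \to 0
\]
one gets $h^0(\mathcal{O}_Y(H))=4$, $h^0(\mathcal{O}_{2E_1}\oplus\mathcal{O}_{E_2})=5$, $H^0(Y,H-2E_1-E_2)=0$, hence $H^1(Y,\mathcal{O}_Y(H-2E_1-E_2))\cong k \ne 0$. So no vanishing theorem (Kawamata--Viehweg or otherwise) can give $H^p(Y,\mathcal{O}_Y(D_2-D_1))=0$ for $p>0$; it is simply not true, and your strategy of ``reduce to $Y$, then prove vanishing on $Y$'' breaks down irreparably for this pair.

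The lemma nevertheless holds because the two failures cancel, and this is the actual content of the paper's proof. One must use the full low-degree Leray exact sequence
\[
0 \to H^1(X,f_*\mathcal{F}) \to H^1(Y,\mathcal{F}) \to H^0(X,R^1f_*\mathcal{F}) \to H^2(X,f_*\mathcal{F}) \to H^2(Y,\mathcal{F})
\]
for $\mathcal{F}=\mathcal{O}_Y(H-2E_1-E_2)$ and show that the one-dimensional $H^1(Y,\mathcal{F})$ maps isomorphically onto $H^0(X,R^1f_*\mathcal{F})$. The paper does this by comparing the displayed sequence with its restriction to $l$: since $H^0(Y,\mathcal{O}_Y(H))\to H^0(l,\mathcal{O}_l(H))$ and $H^0(\mathcal{O}_{2E_1}\oplus\mathcal{O}_{E_2})\to H^0(\mathcal{O}_{2Q_1}\oplus\mathcal{O}_{Q_2})$ are surjective (here $Q_i=E_i\cap l$), the restriction $H^1(Y,\mathcal{F})\to H^1(l,\mathcal{F}|_l)$ is surjective between one-dimensional spaces, and together with $H^2(Y,\mathcal{F})=0$ this forces $H^p(X,f_*\mathcal{F})=0$ for $p>0$. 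Your proposal is missing precisely this idea: that for one of the four pairs the nonvanishing $H^1$ on $Y$ must be shown to be ``absorbed'' by the nonvanishing $R^1f_*$, rather than both being shown to vanish.
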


\begin{proof}
(1) is clear.
(2) Since the $L_i$ are reflexive sheaves of rank $1$, we have $\dim \text{Hom}(L_i,L_i) = 1$.
We have $\text{Hom}(L_1,L_2) = H^0(Y,D_1-D_2) = H^0(Y,-H+2E_1+E_2) = 0$. 
We have also $\text{Hom}(L_2,L_1) = H^0(Y,-D_1+D_2) = H^0(Y,H-2E_1-E_2) = 0$.  

(3) If $i = j$, then $H^p(X,f_*O)) = H^p(Y,O) = 0$ for $p > 0$. 
We consider the case $i \ne j$.
There is a commutative diagram of exact sequences
\[
\begin{CD}
0 @>>> \mathcal{O}_Y(-D_1+D_2) @>>> \mathcal{O}_Y(H) @>>> \mathcal{O}_{2E_1} 
\oplus \mathcal{O}_{E_2} @>>> 0 \\
@. @VVV @VVV @VVV  @. \\
0 @>>> \mathcal{O}_l(-D_1+D_2) @>>> \mathcal{O}_l(H) @>>> \mathcal{O}_{2Q_1} 
\oplus \mathcal{O}_{Q_2} @>>> 0 
\end{CD}
\]
where $Q_i = E_i \cap l$.
In the associated long exact sequences, we have 
$H^0(\mathcal{O}_Y(-D_1+D_2)) = H^0(\mathcal{O}_l(-D_1+D_2)) = 0$, 
$\dim H^0(Y,\mathcal{O}(H)) = 4$, $\dim H^0(l,\mathcal{O}_l(H)) = 2$, 
$\dim H^0(\mathcal{O}_{2E_1} \oplus \mathcal{O}_{E_2}) = 5$, 
$\dim H^0(\mathcal{O}_{2Q_1} \oplus \mathcal{O}_{Q_2}) = 3$, and
$H^p(Y,\mathcal{O}(H)) = H^p(l,\mathcal{O}_l(H)) = 0$ for $p > 0$.
It follows that $\dim H^1(Y,\mathcal{O}(-D_1+D_2)) = \dim H^1(\mathcal{O}_l(-D_1+D_2)) = 1$ and 
$H^p(Y,\mathcal{O}(-D_1+D_2)) = 0$ for $p \ne 1$. 
Moreover the homomorphisms $H^0(Y,\mathcal{O}(H)) \to H^0(l,\mathcal{O}_l(H))$ and 
$H^0(\mathcal{O}_{2E_1} \oplus \mathcal{O}_{E_2}) 
\to H^0(\mathcal{O}_{2Q_1} \oplus \mathcal{O}_{Q_2})$
are surjective.
It follows that the homomorphism $H^1(Y,\mathcal{O}_Y(-D_1+D_2)) 
\to H^1(\mathcal{O}_l(-D_1+D_2))$ is also surjective.

We have an exact sequence
\[
\begin{split}
&0 \to H^1(X,f_*\mathcal{O}_Y(-D_1+D_2)) \to H^1(Y,\mathcal{O}_Y(-D_1+D_2)) 
\to H^0(X,R^1f_*\mathcal{O}_Y(-D_1+D_2)) \\
&\to H^2(X,f_*\mathcal{O}_Y(-D_1+D_2)) \to H^2(Y,\mathcal{O}_Y(-D_1+D_2)).
\end{split}
\]
Since the restriction homomorphism $H^1(Y,\mathcal{O}_Y(-D_1+D_2)) 
\to H^1(l,\mathcal{O}_l(-D_1+D_2))$ is surjective, 
we conclude that $H^p(X,f_*(-D_1+D_2)) = 0$ for $p > 0$.

There is an exact sequence
\[
0 \to \mathcal{O}(-H) \to \mathcal{O}(D_1-D_2) 
\to \mathcal{O}_{2E_1}(2E_1) \oplus \mathcal{O}_{E_2}(E_2) \to 0.
\]
We have 
\[
H^p(Y,\mathcal{O}(-H)) = H^p(\mathcal{O}_{2E_1}(2E_1) \oplus \mathcal{O}_{E_2}(E_2)) = 0
\]
for all $p$.
Hence $H^p(Y,\mathcal{O}_Y(D_1-D_2)) = 0$ for all $p$.
Since $R^pf_*\mathcal{O}_Y(D_1-D_2) = 0$ for $p > 0$, 
we conclude that $H^p(X,f_*\mathcal{O}_Y(D_1-D_2)) = 0$ for $p > 0$.
\end{proof}

Let 
\[
\begin{split}
0 \to L_2 \to F_1 \to L_1 \to 0 \\
0 \to L_1 \to F_2 \to L_2 \to 0 
\end{split}
\]
be the universal extensions corresponding to $\text{Ext}^1(L_i,L_j)$ for $i \ne j$.
Let $F = F_1 \oplus F_2$.
We will calculate the right orthogonal complement $F^{\perp}$ in the rest of the example.

We denote 
\[
\begin{split}
&C'_1 = -3H+2E_1+E_2, \,\, C'_2 = -3H+E_1+2E_2 \\ 
&C'_3 = -2H+E_1+E_2, \,\, C'_4 = -H+E_1, \,\, C'_5 = 0.
\end{split}
\]
Then $(C'_i,l) = 0$ for all $i$.
Let $C_i = f_*C'_i$ be Cartier divisors on $X$ such that $C'_i = f^*C_i$.

\begin{Lem}
The right orthogonal complement $F^{\perp}$ in $D^b(\text{coh}(X))$ is generated by 
a strong exceptional collection consisting of the invertible sheaves
$(\mathcal{O}_X(C_1),\dots,\mathcal{O}_X(C_5))$.
\end{Lem}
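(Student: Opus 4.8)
The plan is to convert every relevant $\text{Hom}$ and $\text{Ext}$ on the singular $X$ into a cohomology computation on the smooth model $Y$, and then to read off the three things that have to be checked (membership in $F^{\perp}$, the strong exceptional property, and generation) from the full exceptional collection on $Y$. The geometric input is that $(K_Y,l)=(-4H+2E_1+2E_2,l)=0$, so $f$ is a crepant small (flopping) contraction: $Rf_*\mathcal{O}_Y=\mathcal{O}_X$, the relative dualizing sheaf is trivial, hence $f^{!}=Lf^{*}$, and $Lf^{*}$ is fully faithful on perfect complexes. Since $(\pm D_i,l)\ge -1$ and $(C'_j-C'_i,l)=0$ there are no higher direct images, so $L_i=Rf_*\mathcal{O}_Y(-D_i)$ and $\mathcal{O}_X(C_j)=Rf_*\mathcal{O}_Y(C'_j)$; then by the adjunction $Rf_*\dashv f^{!}=Lf^{*}$,
\[
\text{Ext}^p_X(L_i,\mathcal{O}_X(C_j))\cong H^p(Y,\mathcal{O}_Y(D_i+C'_j)),\quad
\text{Ext}^p_X(\mathcal{O}_X(C_i),\mathcal{O}_X(C_j))\cong H^p(Y,\mathcal{O}_Y(C'_j-C'_i)).
\]

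For membership, applying $\text{Hom}(-,\mathcal{O}_X(C_j))$ to the two defining sequences \eqref{F_i} reduces $\text{Ext}^p_X(F_i,\mathcal{O}_X(C_j))$ to the groups $H^p(Y,\mathcal{O}_Y(D_k+C'_j))$, and these all vanish: one pushes forward to $\mathbf{P}^3$ or dualizes (for instance $D_1+C'_1=-4H+3E_1+2E_2$ is Serre dual to $-E_1$, whose cohomology is zero). Hence every $\mathcal{O}_X(C_j)$ lies in $F^{\perp}$. For the collection itself I compute $H^p(Y,\mathcal{O}_Y(C'_j-C'_i))$: on the diagonal this is $H^p(X,\mathcal{O}_X)=H^p(Y,\mathcal{O}_Y)$, equal to $k$ for $p=0$ and $0$ otherwise because $Y$ is rational; the higher groups vanish for every pair (strongness), and $H^0$ vanishes whenever $i>j$ (the semiorthogonal ordering). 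Thus $(\mathcal{O}_X(C_1),\dots,\mathcal{O}_X(C_5))$ is a strong exceptional collection contained in $F^{\perp}$.

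It remains to show these five objects generate $F^{\perp}$. Since $\langle F_1,F_2\rangle=T$ and $D^b(\text{coh}(X))=\langle F^{\perp},T\rangle$, this is equivalent to the vanishing of the common right orthogonal $\mathcal{B}$ of the seven objects $\mathcal{O}_X(C_1),\dots,\mathcal{O}_X(C_5),F_1,F_2$. Take $A\in\mathcal{B}$; then $A\in F^{\perp}\subset\text{Perf}(X)$, so $Lf^{*}A\in D^b(\text{coh}(Y))$ is defined, and full faithfulness of $Lf^{*}$ on perfect complexes turns the orthogonality relations into $\text{Hom}_Y(\mathcal{O}_Y(C'_j),Lf^{*}A[p])=0$ and $\text{Hom}_Y(f^{*}F_i,Lf^{*}A[p])=0$ for all $p$. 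Moreover $Rf_*\mathcal{O}_l(-1)=0$ gives $\text{Hom}_Y(Lf^{*}A,\mathcal{O}_l(-1)[p])=\text{Hom}_X(A,Rf_*\mathcal{O}_l(-1)[p])=0$. Applying the Serre functor $-\otimes\omega_Y[3]$ of $Y$, and using $\omega_Y\vert_l=\mathcal{O}_l$ (again because $(K_Y,l)=0$, so $\mathcal{O}_l(-1)\otimes\omega_Y=\mathcal{O}_l(-1)$), I conclude that $Lf^{*}A$ is left orthogonal to the $\omega_Y$-twist of the eight objects $\mathcal{O}_Y(C'_1),\dots,\mathcal{O}_Y(C'_5),f^{*}F_1,f^{*}F_2,\mathcal{O}_l(-1)$.

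The argument is then finished provided these eight objects generate $D^b(\text{coh}(Y))$: for then their $\omega_Y$-twist also generates, the left orthogonal of a generating set is zero, whence $Lf^{*}A=0$ and $A=Rf_*Lf^{*}A=0$. I expect this final step to be the main obstacle. It has to be verified by comparison with the given full collection $\langle\mathcal{O}_{E_1}(2E_1),\dots,\mathcal{O}\rangle$, expressing each of its eight terms in terms of the line bundles $\mathcal{O}_Y(C'_j)$, the rank-two bundles $f^{*}F_i$, and the flop curve sheaf $\mathcal{O}_l(-1)$ through a chain of mutations. The appearance of the $f^{*}F_i$ (which form one $R$-block rather than split exceptional objects) and of the curve $l$, which is absent from $X$, is precisely what accounts for the passage from seven generators on $X$ to eight on $Y$, and carrying out the mutations is where the real computation lies.
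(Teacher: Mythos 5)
Your parts (1) and (2) are fine and essentially coincide with the paper's proof: crepancy of $f$ (so $f^!\mathcal{O}_X(C_j)\cong\mathcal{O}_Y(C'_j)$), adjunction/duality, and cohomology computations on $Y$ of the bundles $\mathcal{O}_Y(D_i+C'_j)$ and $\mathcal{O}_Y(C'_j-C'_i)$ (though you assert most of the vanishings rather than compute them). Your reduction of part (3) is also sound as far as it goes: since $A\in F^{\perp}\subset\text{Perf}(X)$, the functor $Lf^*$ is defined and fully faithful on such objects, and the Serre-duality trick converting $Rf_*\mathcal{O}_l(-1)=0$ into $\text{Hom}_Y(\mathcal{O}_l(-1),Lf^*A[p])=0$ is correct. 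But the proof stops exactly where the content of the lemma lies: you never establish that $\mathcal{O}_Y(C'_1),\dots,\mathcal{O}_Y(C'_5),f^*F_1,f^*F_2,\mathcal{O}_l(-1)$ generate $D^b(\text{coh}(Y))$, deferring it to ``a chain of mutations.'' This is a genuine gap, and the deferred plan cannot be carried out as stated, because your eight objects do not form an exceptional collection: by the projection formula $\text{Hom}_Y(f^*F_1,f^*F_2)\cong\text{Hom}_X(F_1,F_2)\cong kt$ and $\text{Hom}_Y(f^*F_2,f^*F_1)\cong kt$ are both nonzero, and $\mathcal{O}_l(-1)$ is spherical rather than exceptional ($\text{Ext}^3_Y(\mathcal{O}_l(-1),\mathcal{O}_l(-1))\cong\text{Hom}(\mathcal{O}_l(-1),\mathcal{O}_l(-1)\otimes\omega_Y)^*\cong k$ since $\omega_Y|_l\cong\mathcal{O}_l$), so mutation theory for exceptional collections does not apply. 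Note also the circularity trap: the natural verification of your statement on $Y$ (push $B$ down, use generation of the seven objects on $X$ to get $Rf_*B=0$, then use that $\ker Rf_*$ is generated by $\mathcal{O}_l(-1)$) presupposes the generation statement on $X$ that you are trying to prove.

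The paper closes this hole by a different, non-circular mechanism that avoids $f^*F_i$ and $\mathcal{O}_l(-1)$ altogether. It first enlarges the subcategory $C$ generated by the $\mathcal{O}_X(C_j)$ and the $L_j$ by pushed-forward exact sequences (e.g.\ $0\to f_*\mathcal{O}_Y(-H+E_1+E_2)\to\mathcal{O}_X^2\to L_1\to 0$), so that $C$ contains the $Rf_*$-images of eight explicit line bundles on $Y$; it then proves by elementary exact sequences that those eight line bundles form a full exceptional collection of $D^b(\text{coh}(Y))$; finally it proves a general lemma that if $Rf_*\mathcal{O}_Y\cong\mathcal{O}_X$ then the Karoubian envelope of the image of $Rf_*\colon D^b(\text{coh}(Y))\to D^b(\text{coh}(X))$ is all of $D^b(\text{coh}(X))$ (a truncation argument using that $D(\text{Qcoh}(X))$ is Karoubian), which transfers generation from $Y$ down to $X$: any $A$ right orthogonal to $Rf_*$ of the collection is right orthogonal to all $Rf_*B$, and being a direct summand of some $Rf_*B$, must vanish. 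To rescue your pullback strategy you would have to replace the mutation plan by a direct proof that the subcategory generated by your eight objects contains some known generating set of $D^b(\text{coh}(Y))$, which in particular requires an explicit description of $f^*F_i$ (the pullbacks of the sequences \eqref{F_i} are not exact, since the $L_i$ are not locally free at $P$); the paper's pushforward route is substantially simpler.
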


\begin{proof}
(1) We prove that the sequence is a strong exceptional collection.

Since $H^p(X,\mathcal{O}_X) = 0$ for $p > 0$, the $\mathcal{O}_X(C_i)$ are exceptional objects.
We prove that $\text{Hom}(\mathcal{O}(C_i),\mathcal{O}(C_j)[p]) = 0$ for $i > j$ and for all $p$, 
and $\text{Hom}(\mathcal{O}(C_i),\mathcal{O}(C_j)[p]) = 0$ for all $i,j$ and $p > 0$.
We note that $\text{Hom}_X(\mathcal{O}_X(C_i),\mathcal{O}_X(C_j)[p]) 
\cong \text{Hom}_Y(\mathcal{O}_Y(C'_i),\mathcal{O}_Y(C'_j)[p])$.
We calculate 
\[
\begin{split}
&H^p(Y,E_1-E_2) = H^p(Y,-H+E_1) = H^p(Y,-H+E_2) = H^p(Y,-2H+E_1+E_2) \\
&=H^p(Y,-2H+2E_2) = H^p(Y,-3H+2E_1+E_2) = H^p(Y,-3H+E_1+2E_2) = 0
\end{split}
\]
for all $p$.
For example, we have an exact sequence 
\[
\dots \to H^p(Y,E_1-E_2) \to H^p(Y,E_1) \to H^p(E_2,\mathcal{O}_{E_2}) \to \dots.
\]
Since $H^p(Y,E_1) \cong H^p(Y,\mathcal{O}_Y)$, we have $H^p(Y,E_1-E_2)=0$ for all $p$.
We have $H^p(Y,-3H+E_1+2E_2) = H^p(Y,-3H) = 0$ for all $p$, etc.
We also calculate 
\[
\begin{split}
&H^p(Y,-E_1+E_2) = H^p(Y,H-E_1) = H^p(Y,H-E_2) = H^p(Y,2H-E_1-E_2) \\
&= H^p(Y,2H-2E_2) = H^p(Y,3H-2E_1-E_2) = H^p(Y,3H-E_1-2E_2) = 0
\end{split}
\]
for $p > 0$.
For example, we have an exact sequence 
\[
\dots \to H^p(Y,H-E_1) \to H^p(Y,H) \to H^p(E_1,\mathcal{O}_{E_1}) \to \dots.
\]
Since $H^0(Y,H) \to H^0(E_1,\mathcal{O}_{E_1})$ is surjective and $H^p(Y,H)=0$ for $p > 0$, we have
$H^p(Y,H-E_1) = 0$ for $p > 0$, etc.

\vskip 1pc

(2) We prove that the $\mathcal{O}(C_i)$ belong to $F^{\perp}$.

By the Grothendieck duality, we have 
\[
\text{Hom}_X(L_i,\mathcal{O}(C_j)[p]) = \text{Hom}_X(Rf_*\mathcal{O}_Y(D_i),\mathcal{O}_X(C_j)[p]) 
\cong \text{Hom}_Y(\mathcal{O}_Y(D_i),\mathcal{O}_Y(C'_j)[p])
\]
because $f^!\mathcal{O}_X(C_j) \cong \mathcal{O}_Y(C'_j)$. 
Therefore we will prove that 
$H^p(Y,C_j-H+E_1+E_2) = H^p(Y,C_j - E_1) = 0$ for all $j$ and $p$.

Since $K_Y = -4H+2E_1+2E_2$, we have
\[
H^p(Y,-4H+3E_1+2E_2) = H^{3-p}(Y,-E_1)^* = 0
\]
for all $p$.
We also have 
\[
H^p(Y,-3H+E_1+E_2) \cong H^p(\mathbf{P}^3,-3H) = 0 
\]
for all $p$.
Therefore we have $\mathcal{O}(C_1) \in F^{\perp}$.
 
We have
\[
\begin{split}
&H^p(Y,-4H+2E_1+3E_2) = H^{3-p}(Y,-E_2)^* = 0 \\
&H^p(Y,-3H+2E_2) \cong H^p(\mathbf{P}^3,-3H) = 0 
\end{split}
\]
for all $p$, hence we have $\mathcal{O}(C_2) \in F^{\perp}$.

We have
\[
\begin{split}
&H^p(Y,\mathcal{O}(-3H+2E_1+2E_2)) \cong H^p(\mathbf{P}^3,\mathcal{O}(-3H)) = 0 \\
&H^p(Y,\mathcal{O}(-2H+E_2)) \cong H^p(\mathbf{P}^3,\mathcal{O}(-3H)) = 0 \\ 
&H^p(Y,\mathcal{O}(-2H+2E_1+E_2)) \cong H^p(\mathbf{P}^3,\mathcal{O}(-2H)) = 0 \\
&H^p(Y,\mathcal{O}(-H)) = 0 \\ 
&H^p(Y,\mathcal{O}(-H+E_1+E_2)) \cong H^p(\mathbf{P}^3,\mathcal{O}(-H)) = 0 \\
&H^p(Y,\mathcal{O}(-E_1)) = 0 
\end{split}
\]
for all $p$, hence we have $\mathcal{O}_X(C_i) \in F^{\perp}$ for $i=3,4,5$.

\vskip 1pc

(3) We prove that the $\mathcal{O}_X(C_i)$ and the $L_j$ generate $D^b(\text{coh}(X))$.
Then it follows that the $\mathcal{O}_X(C_i)$ generates $F^{\perp}$.
We denote by $C$ the triangulated subcategory of $D^b(\text{coh}(X))$ 
generated by the $\mathcal{O}_X(C_i)$ and the $L_j$.

The linear system $\vert H - E_1 - E_2 \vert$ is a pencil, and its base locus is nothing but $l$.
The image of the the natural homomorphism $\mathcal{O}_Y^2 \to \mathcal{O}_Y(H - E_1 - E_2)$ 
is equal to $I_l(H - E_1 - E_2)$, 
where $I_l$ is the ideal sheaf of $l$.
Hence we have an exact sequence
\[
0 \to \mathcal{O}_Y(-H+E_1+E_2) \to \mathcal{O}_Y^2 \to \mathcal{O}_Y(H - E_1 - E_2) 
\to \mathcal{O}_l(-1) \to 0.
\]
By pushing down to $X$, we obtain an exact sequence
\[
0 \to f_*\mathcal{O}_Y(-H+E_1+E_2) \to \mathcal{O}_X^2 \to L_1 \to 0.
\]
Therefore $C$ coincides with the triangulated subcategory generated by the 
$\mathcal{O}_X(C_j)$, the $L_j$ and 
$f_*\mathcal{O}_Y(-H+E_1+E_2)$.

We have exact sequences
\[
\begin{split}
&0 \to \mathcal{O}_X \to f_*\mathcal{O}_Y(E_1) \to f_*\mathcal{O}_{E_1}(E_1) \to 0 \\
&0 \to f_*\mathcal{O}_Y(-H+E_2) \to f_*\mathcal{O}_Y(-H+E_1+E_2) \to f_*\mathcal{O}_{E_1}(E_1) \to 0.
\end{split}
\]
Thus $f_*\mathcal{O}_Y(-H+E_2)$ can also be included in the set of generators of $C$.
We note that $(-H+E_2,l) = 0$, hence $f_*\mathcal{O}_Y(-H+E_2)$ is an invertible sheaf on $X$.

We need some lemmas:

\begin{Lem}\label{classical}
$D^b(\text{coh}(Y))$ is classically generated by the following full exceptional collection:
\[
\begin{split}
D^b(\text{coh}(Y)) = \langle &\mathcal{O}_Y(-3H+2E_1+E_2), \mathcal{O}_Y(-3H+E_1+2E_2), 
\mathcal{O}_Y(-2H+E_1+E_2), \\
&\mathcal{O}_Y(-H+E_1), \mathcal{O}_Y(-H+E_2)), \mathcal{O}_Y(-H+E_1+E_2), \mathcal{O}_Y, 
\mathcal{O}_Y(H-E_1-E_2) \rangle.
\end{split}
\]
\end{Lem}

We note that the images by $Rf_*$ of these exceptional objects on $Y$ are exactly the objects
considered above as the generators of $C$.

\begin{proof}
We first prove that these objects constitute an exceptional collection.
Since they are all line bundles, they are exceptional objects.
We check their semi-orthogonality.
We have $H^p(Y,E_1-E_2) = H^p(Y,-H+E_i) = H^p(Y,-2H+E_1+E_2)=H^p(Y,-2H+2E_i)=0$ for all $p$ 
and all $i$, 
hence the first 5 are semi-orthogonal.
We have $H^p(Y,-2H+2E_1+2E_2) = H^p(-H+E_1+E_2) = 0$ for all $p$, 
hence the latter 3 are also semi-orthogonal.

We have $H^p(Y,-2H+E_i)=H^p(Y,-H)=H^p(Y,-E_i)=0$ and 
$H^p(Y,-3H+2E_i+E_j)=H^p(Y,-2H+2E_i+E_j)=0$ for all $p$ and $i \ne j$.
By the Serre duality, $H^p(Y,-4H+3E_i+2E_j)$ is dual to $H^{3-p}(Y,-E_i)=0$ for $i \ne j$.
Hence the first 5 and the latter 3 are semi-orthogonal, and these 8 objects make an exceptional collection.

We prove that these objects classically generate $D^b(\text{coh}(Y))$.
Let $T$ be the full triangulated subcategory of $D^b(\text{coh}(Y))$ 
classically generated by the above exceptional collection.
By the exact sequences
\[
\begin{split}
&0 \to \mathcal{O}_Y(-H+E_i) \to \mathcal{O}_Y(-H+E_1+E_2) \to \mathcal{O}_{E_j}(E_j) \to 0 \\
&0 \to \mathcal{O}_Y(-H) \to \mathcal{O}_Y(-H+E_i) \to \mathcal{O}_{E_i}(E_i) \to 0 \\
&0 \to \mathcal{O}_Y(-2H) \to \mathcal{O}_Y(-2H+E_1+E_2) 
\to \mathcal{O}_{E_1}(E_1) \oplus \mathcal{O}_{E_2}(E_2) \to 0
\end{split}
\]
for $i \ne j$, we deduce that the objects $\mathcal{O}_{E_i}(E_i)$ for $i=1,2$, $\mathcal{O}_Y(-H)$ and 
$\mathcal{O}_Y(-2H)$ can be included 
in the set of classical generators of $T$.

From the exact sequences
\[
\begin{split}
&0 \to \mathcal{O}(-H+E_1+E_2) \to O^2 \to \mathcal{O}(H - E_1 - E_2) \to \mathcal{O}_l(-1) \to 0 \\
&0 \to \mathcal{O}(-3H+2E_1+2E_2) \to \mathcal{O}(-2H+E_1+E_2)^2 \to \mathcal{O}(-H) 
\to \mathcal{O}_L(-1)\to 0
\end{split}
\]
we deduce that $\mathcal{O}_Y(-3H+2E_1+2E_2)$ can also be included.
From an exact sequence
\[
0 \to \mathcal{O}_Y(-3H+E_i+2E_j) \to \mathcal{O}_Y(-3H+2E_1+2E_2) \to \mathcal{O}_{E_i}(2E_i) \to 0
\]
for $i \ne j$, we deduce that $\mathcal{O}_{E_i}(2E_i)$ can be included for $i = 1,2$.
From
\[
0 \to \mathcal{O}_Y(-3H) \to \mathcal{O}_Y(-3H+2E_1+2E_2) 
\to \mathcal{O}_{2E_1}(2E_1) \oplus \mathcal{O}_{2E_2}(2E_2) \to 0
\]
we deduce that $\mathcal{O}_Y(-3H)$ can be included.
Therefore $T = D^b(\text{coh}(Y))$.
\end{proof}

The following lemma says that the direct image functor
$Rf_*$ for a birational morphism $f$ 
is almost surjective for the bounded derived categories of coherent sheaves if $Rf_*$ 
preserves the structure sheaves:

\begin{Lem}
Let $f: Y \to X$ be a birational morphism of projective varieties.
Assume that $Rf_*\mathcal{O}_Y \cong \mathcal{O}_X$.
Then the Karoubian envelope of the image of the functor $Rf_*: D^b(\text{coh}(Y)) \to D^b(\text{coh}(X))$ 
coincides with $D^b(\text{coh}(X))$.
\end{Lem}

\begin{proof}
By the assumption, we have $Rf_*Lf^* \cong \text{Id}$ on $D(\text{Qcoh}(X))$.
Let $A \in D^b(\text{coh}(X))$ be any object.
Then we have $Lf^*A \in D^-(\text{coh}(Y))$.
We take a large integer $m$ and consider a natural distinguished triangle for truncations: 
\[
\begin{CD}
\tau_{<-m}Lf^*A @>h>> Lf^*A @>>> \tau_{\ge -m}Lf^*A @>>> (\tau_{<-m}Lf^*A)[1].
\end{CD}
\]
We have a morphism $Rf_*(h): Rf_*(\tau_{<-m}Lf^*A) \to Rf_*Lf^*A \cong A$.
Since $Rf_*$ is bounded and $A$ is bounded, $Rf_*(h) = 0$ for sufficiently large $m$. 
It follows that $A$ is a direct summand of $Rf_*(\tau_{\ge -m}Lf^*A)$ in $D(\text{Qcoh}(X))$ because
$D(\text{Qcoh}(X))$ is Karoubian by \cite{BN}:
\[
Rf_*(\tau_{\ge -m}Lf^*A) \cong Rf_*(\tau_{<-m}Lf^*A)[1] \oplus A.
\]
Since $Rf_*(\tau_{\ge -m}Lf^*A) \in D^b(\text{coh}(X))$, we conclude that $A$ belongs 
to the Karoubian completion
of the image of $Rf_*$.
\end{proof}

Let $G$ be the set of exceptional objects which classically generates 
$D^b(\text{coh}(Y))$ in Lemma~\ref{classical}.
We will prove that $Rf_*G$ generates $D^b(\text{coh}(X))$.
Let $A \in D^b(\text{coh}(X))$ be any object.
We need to prove that, if $\text{Hom}(Rf_*G,A[p]) = 0$ for all $p$, then $A \cong 0$.
 
We know that $A$ is a direct summand of $Rf_*B$ for some object $B \in D^b(\text{coh}(Y))$.
Since $G$ classically generates $D^b(\text{coh}(Y))$, we deduce that $\text{Hom}(Rf_*B,A[p]) = 0$ 
for all $B$ and all $p$.
Then it follows that $A \cong 0$.
\end{proof}

\begin{Rem}
It is interesting to calculate the derived categories of varieties which are obtained by blowing up 
$\mathbf{P}^3$ at 
more than $2$ points.
Especially, if we blow up $6$ or more points, then the blown-up varieties have moduli.
It is interesting to determine the semi-orthogonal complement of the trivial factors in this case.
\end{Rem}

%%%%%%%%%%%%%%%%%%%%%%%%%%%%%%%%%%%%%%%%%%
%%%%%%%%%%%%%%%%%%%%%%%%%%%%%%%%%%%%%%%%%%
%%%%%%%%%%%%%%%%%%%%%%%%%%%%%%%%%%%%%%%%%%
%%%%%%%%%%%%%%%%%%%%%%%%%%%%%%%%%%%%%%%%%%
\subsection{Locally factorial case}

We consider an example of a $3$-fold with a $\mathbf{Q}$-factorial ordinary double point.
We will see that similar arguments to the non-$\mathbf{Q}$-factorial case 
do not work because NC deformations do not terminate.

We start with an example of a singular curve: 

\begin{Expl}
Let $X$ be a nodal cubic curve defined by an equation 
$(x^2+y^2)z+y^3=0$ in $\mathbf{P}^2$.

Let $P \in X$ be the singular point and let $\nu: X' \to X$ be the normalization.
Then $X' \cong \mathbf{P}^1$ and $H^1(\mathcal{O}_X) = k$.
We consider non-commutative deformations of a Cohen-Macaulay sheaf 
$L = \nu_*\mathcal{O}_{X'}$ which generates $D_{\text{sg}}(X)$.

From an exact sequence 
\[
0 \to \mathcal{O}_X \to L \to \mathcal{O}_P \to 0, 
\]
we consider an associated long exact sequence to obtain
\[
\begin{split}
&\mathcal{H}om_{\mathcal{O}_X}(L,L) \cong  \mathcal{H}om_{\mathcal{O}_X}(\mathcal{O}_X,L) \cong L \\
&\mathcal{E}xt^p_{\mathcal{O}_X}(\mathcal{O}_P,L) \cong \mathcal{E}xt^p_{\mathcal{O}_X}(L,L), \,\, p > 0.
\end{split}
\]
Since $X$ is Gorenstein, we have 
\[
\mathcal{E}xt^p_{\mathcal{O}_X}(\mathcal{O}_P,\mathcal{O}_X) 
\cong \begin{cases} \mathcal{O}_P, \,\,&p = 1\\ 
0, &p \ne 1. \end{cases}
\]
Therefore from another associated long exact sequence, we obtain
\[
\mathcal{E}xt^p_{\mathcal{O}_X}(\mathcal{O}_P,L) 
\cong \mathcal{E}xt^p_{\mathcal{O}_X}(\mathcal{O}_P,\mathcal{O}_P)
\]
for all $p > 0$.
In particular we have 
$\mathcal{E}xt^1_{\mathcal{O}_X}(L,L) 
\cong \mathcal{E}xt^1_{\mathcal{O}_X}(\mathcal{O}_P,\mathcal{O}_P) \cong k^2$.

The versal NC deformation of $\mathcal{O}_P$ on $X$ is given by the completion of $X$ at $P$.
Its parameter algebra is given by $k[[x,y]]/(xy)$.
The versal NC deformation of $L$ is given by an infinite chain of smooth rational curves. 
It is the inverse limit of the sheaves $L_{i,j}$ for $i,j \to \infty$ defined in the following way.
$L_{i,j}$ is the direct image sheaf of an invertible sheaf $L'_{i,j}$ 
on a chain of smooth rational curves of type $A_{i+j+1}$, where the degree of $L_{i,j}$ 
on the $m$-th component 
is equal to $0$ for $m = i + 1$, and to $1$ otherwise. 
The parameter algebra for $L_{i,j}$ is given by $k[x,y]/(xy,x^{i+1},y^{j+1})$.
In particular NC deformations of $L$ do not stop after finitely many steps.
\end{Expl}

We consider a locally factorial surface:

\begin{Expl}\label{K3}
Let $\pi: Y_2 \to \mathbf{P}^2$ be a double cover whose ramification divisor is a generic curve of 
degree $6$
with one node at $Q \in \mathbf{P}^2$.
Then $P = \pi^{-1}(Q) \in Y_2$ is the only singular point of $Y_2$.

We claim that $P \in Y_2$ is a factorial ordinary double point.
This construction and the following argument is communicated by Keiji Oguiso.
Let $Y' \to Y_2$ be a minimal resolution with an exceptional divisor $E$.
Then $Y'$ is a K3 surface, and the Neron-Severi lattice is given by 
$NS(Y') = \mathbf{Z}H \oplus \mathbf{Z}E$, 
where $H$ is the pull-back of a line on $\mathbf{P}^2$, 
due to the genericity of the ramification divisor.
We have $(H^2) = 2$, $(E^2) = -2$, and $(H,E) = 0$, hence $P \in Y_2$ is factorial.

We note that an ordinary double point on a rational surface, 
say $S$, is always non-factorial (though $2$-factorial).
Indeed the Neron-Severi lattice of its resolution $S' \to S$ is always unimodular 
since $H^2(\mathcal{O}_{S'}) = 0$.
Hence there is a curve on $S'$ whose intersection number with the exceptional curve is odd, 
and its image on $S$
is not a Cartier divisor.  

Let $l$ be a generic line in $\mathbf{P}^2$ through $Q$, and let $C = \pi^{-1}(l)$.
Then $C$ is an irreducible curve of genus $1$ with a node.
Let $\nu: C' \to C$ be the normalization, and let $L_{C'}$ be an invertible sheaf
on $C'$ of degree $2$.
Then there is a surjective homomorphism $\mathcal{O}_C^{\oplus 2} \to \nu_*L_{C'}$.
Let $L$ be the kernel of the composite homomorphism 
$\mathcal{O}_{Y_2}^{\oplus 2} \to \mathcal{O}_C^{\oplus 2} \to \nu_*L_{C'}$:
\[
0 \to L \to \mathcal{O}_{Y_2}^{\oplus 2} \to \nu_*L_{C'} \to 0.
\]
$L$ is a reflexive sheaf of rank $2$ which is locally free except at $P$.
We obtain $H^p(L) = 0$ for all $p$ from a long exact sequence.
$C$ has two analytic branches at $P$, 
and $L$ is analytically isomorphic to a direct sum of reflexive sheaves of rank $1$ near $P$.

We have exact sequences
\[
\begin{split}
&0 \to L^{\oplus 2} \to \mathcal{H}om(L,L) \to \mathcal{E}xt^1(\nu_*L_{C'},L) \to 0 \\
&0 \to \mathcal{H}om(\nu_*L_{C'},\nu_*L_{C'}) \to \mathcal{E}xt^1(\nu_*L_{C'},L)
\to \mathcal{E}xt^1(\nu_*L_{C'},\mathcal{O}_{Y_2}^{\oplus 2}).
\end{split}
\]
Since $Y_2$ is Gorenstein with $\omega_{Y_2} \cong \mathcal{O}_{Y_2}$, 
we have by the Grothendieck duality 
\[
\mathcal{E}xt^1(\nu_*L_{C'},\mathcal{O}_{Y_2}) \cong \nu_*L_{C'}^{-1}.
\]
Since $H^0(\nu_*L_{C'}^{-1}) = 0$, we have 
\[
\text{Hom}(L,L) \cong H^0(\mathcal{E}xt^1(\nu_*L_{C'},L)) \cong \text{Hom}(\nu_*L_{C'},\nu_*L_{C'}) 
\cong k.
\]
Thus $L$ is a simple sheaf on $Y_2$.

The NC deformations of $L$ do not stop after finitely many steps.
Indeed the two analytic components extend in an infinite chain as in the previous example. 
\end{Expl}

Now we consider a $3$-fold with a $\mathbf{Q}$-factorial, hence factorial, ordinary double point:

\begin{Expl}
Let $X_0$ be a cubic $3$-fold in $\mathbf{P}^4$ with coordinates $(x,y,z,w,t)$ defined by an equation
\[
x^3+3xy^2+w^3-3t(xy+z^2+w^2) = 0.
\]
The singular locus of $X_0$ consists of $2$-points $P = (0,0,0,0,1)$ and $P' = (0,-1,0,0,1)$ 
which are ordinary double points.
Let $g: X \to X_0$ be the blowing up at $P'$ with the exceptional divisor 
$E \cong \mathbf{P}^1 \times \mathbf{P}^1$.

The local equation of $X$ at $P$ can be written as
\[
x(x^2+3y^2+y) + z^2 + w^2 + w^3 = 0.
\]
Hence $X$ is a projective variety with one $\mathbf{Q}$-factorial ODP.

Let $D_0$ be a prime divisor on $X_0$ defined by $x = 0$.
Then $D_0$ has an equation
\[
w^3-3t(z^2+w^2) = 0
\]
in $\mathbf{P}^3$ with coordinates $(y,z,w,t)$.
$D_0$ is a cone over a nodal cubic curve.
The vertex of the cone is at $Q = (0,1,0,0,0)$.
The singular locus of $D_0$ is a line defined by $z = w = 0$.
Let $\nu_0: D'_0 \to D_0$ be the normalization.
$D'_0$ is the cone over a normal rational curve of degree $3$.

$g$ induces a blowing up $g_D: D \to D_0$ at $P'$.
The exceptional locus of $g_D$ consists of two lines $m_1, m_2$ on $\mathbf{P}^1 \times \mathbf{P}^1$.
The singular locus of $D$ is a smooth rational curve which is the strict transform of the line 
$\{z = w = 0\}$. 
Let $\nu: D' \to D$ be the normalization.

Let $l$ be a generic line on $D'$ through the vertex.
Then there is a surjective homomorphism $\mathcal{O}_D^{\oplus 2} \to \nu_*\mathcal{O}_{D'}(l)$.
Let $L$ be the kernel of the composition 
$\mathcal{O}_X^{\oplus 2} \to \mathcal{O}_D^{\oplus 2} \to \nu_*\mathcal{O}_{D'}(l)$. 
Thus we have an exact sequence
\[
0 \to L \to \mathcal{O}_X^{\oplus 2} \to \mu_*\mathcal{O}_{D'}(l) \to 0.
\]
There is an exact sequence
\[
\dots \to H^p_P(L) \to H^p_P(\mathcal{O}_X^2) \to H^p_P(\nu_*\mathcal{O}_{D'}(l)) \to \dots
\] 
Since $\mathcal{O}_X$ and $\nu_*\mathcal{O}_{D'}(l)$ have depth $3$ and $2$, respectively, 
$L$ is a maximally Cohen-Macaulay 
sheaf of rank $2$ on $X$ which is locally free except at $P$.
$L$ generates $D_{\text{sg}}(X)$.

We will prove that $L$ is a simple sheaf, i.e., $\text{End}(L) \cong k$.
We have $\dim H^0(\mathcal{O}_X) = 1$, $H^p(\mathcal{O}_X) = 0$ for $p > 0$, 
$\dim H^0(\mathcal{O}_{D'}(l)) = 2$ and 
$H^p(\mathcal{O}_{D'}(l)) = 0$ for $p > 0$.
Therefore we have $H^p(L) = 0$ for all $p$.
We have an exact sequence
\[
0 \to L^{\oplus 2} \to \mathcal{H}om(L,L) \to \mathcal{E}xt^1(\mu_*\mathcal{O}_{D'}(l),L) \to 0
\]
and $\mathcal{E}xt^p(L,L) \cong \mathcal{E}xt^{p+1}(\mu_*\mathcal{O}_{D'}(l),L)$ for $p > 0$.

Since $X$ is Gorenstein, we have 
\[
\mathcal{E}xt^p(\mu_*\mathcal{O}_{D'}(l),\mathcal{O}_X) \cong \begin{cases} \mu_*\omega_{D'/X}(-l),
\,\, &p = 1 \\
0, \,\, &p \ne 1 \end{cases}
\]
by the Grothendieck duality.
From a long exact sequence, we deduce
\[
\begin{split}
&0 \to \mathcal{H}om(\mu_*\mathcal{O}_{D'}(l),\mu_*\mathcal{O}_{D'}(l)) 
\to \mathcal{E}xt^1(\mu_*\mathcal{O}_{D'}(l),L) 
\to \mu_*\omega_{D'/X}(-l)^{\oplus 2} \\
&\to \mathcal{E}xt^1(\mu_*\mathcal{O}_{D'}(l),\mu_*\mathcal{O}_{D'}(l)) 
\to \mathcal{E}xt^2(\mu_*\mathcal{O}_{D'}(l),L) \to 0
\end{split}
\]
and 
$\mathcal{E}xt^p(\mu_*\mathcal{O}_{D'}(l),\mu_*\mathcal{O}_{D'}(l)) 
\cong \mathcal{E}xt^{p+1}(\mu_*\mathcal{O}_{D'}(l),L)$ for $p > 1$.

Since $D \sim \mathcal{O}_X(1) - E$ on $X$, we calculate 
\[
\omega_{D'/X}(-l) \cong \mathcal{O}_{D'}(3l - (m_1+m_2) - (l-m_1) - (l-m_2) - l) \cong \mathcal{O}_{D'}.
\]
Thus $H^0(\mu_*\omega_{D'/X}(-l)^{\oplus 2}) \cong k^2$.
$L$ is a locally free sheaf outside $P$, 
and $\mu_*\mathcal{O}_{D'}(l)$ is an invertible sheaf on 
the smooth locus of a Cartier divisor $D$.
Moreover $\mu_*\mathcal{O}_{D'}(l)$ is analytically isomorphic to the sum of invertible sheaves 
on two Cartier divisors 
along the double locus of $D$ except at $P$ and $Q$.
Therefore we have $\mathcal{E}xt^2(\mu_*\mathcal{O}_{D'}(l),L)$ is supported at $\{P,Q\}$.
On the other hand, $\mathcal{E}xt^1(\mu_*\mathcal{O}_{D'}(l),\mu_*\mathcal{O}_{D'}(l))$ 
is an invertible sheaf on the smooth locus of $D$
and has higher rank along the double locus of $D$.
It follows that the homomorphism
\[
H^0(\mu_*\omega_{D'/X}(-l)^{\oplus 2}) 
\to H^0(\mathcal{E}xt^1(\mu_*\mathcal{O}_{D'}(l),\mu_*\mathcal{O}_{D'}(l)))
\]
is injective.
Therefore 
\[
H^0(\mathcal{E}xt^1(\mu_*\mathcal{O}_{D'}(l),L)) 
\cong H^0(\mathcal{H}om(\mu_*\mathcal{O}_{D'}(l),\mu_*\mathcal{O}_{D'}(l))) \cong k
\]
hence $L$ is a simple sheaf.

We consider NC deformations of $L$.
The successive extensions of $L$ become an infinite chain as in the previous examples 
so that they do not stop after finitely many steps.
Thus we do not obtain an SOD unlike the case of non-Q-factorial ODP. 

We note that there is an extension $F$ of $L$ by $L$ which is locally free.
But $F$ has still an extension by $L$, and the successive extensions by $L$ do not stop.
\end{Expl}

%%%%%%%%%%%%%%%%%%%%%%%%%%%%%%%%%%%%%%%%%%
%%%%%%%%%%%%%%%%%%%%%%%%%%%%%%%%%%%%%%%%%%
\section{Appendix: Correction to \cite{NC}}

We correct an error in \cite{NC}.

In Example 5.7 of \cite{NC}, 
it is claimed that a collection $(\mathcal{O}_X(-d), G, \mathcal{O}_X)$ yields 
a semi-orthogonal decomposition of $D^b(\text{coh}(X))$.
But it is not the case because the semi-orthogonality fails: $R\text{Hom}(G,\mathcal{O}_X(-d)) \ne 0$.
The correct collection is $(G(-d), \mathcal{O}_X(-d), \mathcal{O}_X)$.
Then we have $D^b(\text{coh}(X)) \cong \langle D^b(R), D^b(k), D^b(k) \rangle$.  

The only thing which is left to be proved is the semi-orthogonality.
But we have $R\text{Hom}(\mathcal{O}_X(-d),G(-d)) \cong RH(X,G) = 0$.

There are more information in \cite{Kuznetsov} and \cite{KKS}.

%%%%%%%%%%%%%%%%%%%%%%%%%%%%%%%%%%%%%%%%%%

Graduate School of Mathematical Sciences, University of Tokyo,
Komaba, Meguro, Tokyo, 153-8914, Japan. 

Department of Mathematical Sciences, 
KAIST, 291 Daehak-ro, Yuseong-gu, Daejeon 34141, Korea.

National Center for Theoretical Sciences, 
Mathematics Division, 
National Taiwan University, Taipei, 106, Taiwan.

kawamata@ms.u-tokyo.ac.jp

\end{document}